\documentclass[11pt,reqno]{amsart}
\usepackage{amsmath,amssymb,amsthm}
\usepackage[T1]{fontenc}
\usepackage{lmodern}
\usepackage[margin=1.15in]{geometry}
\usepackage{microtype}
\usepackage[unicode,colorlinks=true,linkcolor=blue,citecolor=blue,urlcolor=blue]{hyperref}

\hypersetup{pdftitle={Analytic Liouville Integrable Geodesic Flow Has Zero Entropy}}
\numberwithin{equation}{section}
\newtheorem{theorem}{Theorem}[section]
\newtheorem{lemma}[theorem]{Lemma}
\newtheorem{proposition}[theorem]{Proposition}

\theoremstyle{definition}

\theoremstyle{remark}
\newtheorem{remark}[theorem]{Remark}

\newcommand{\R}{\mathbb{R}}

\title[Analytic Liouville Integrable Geodesic Flow Has Zero Entropy]{Analytic Liouville Integrable Geodesic Flow Has Zero Entropy}

\author{Fei Liu}
\address{College of Mathematics and Systems Science, Shandong University of Science and Technology, Qingdao, 266590, China.}
\email{liufei$@$math.pku.edu.cn }
\author{Xiaokai Liu}
\address{Department of Mathematics, Southern University of Science and Technology, Shenzhen, 518055, China.}
\email{liuxk2024@mail.sustech.edu.cn}
\author{Xiang Zhang}
\address{School of Mathematical Sciences, Shanghai Jiao Tong University, Shanghai 200240, China.}
\email{xzhang@sjtu.edu.cn }

\date{\today}

\subjclass[2020]{37J35, 37B40, 37D25, 53D25}
\keywords{geodesic flow, real-analytic integrability, topological entropy, singular set, Lyapunov exponent}
\begin{document}
\begin{abstract}
    This paper studies the topological entropy of real-analytic Liouville integrable Hamiltonian systems. We prove that the Hamiltonian flow on a compact connected regular energy hypersurface has zero topological entropy whenever a complete family of commuting real-analytic first integrals is generically independent in a connected neighborhood of that hypersurface. The proof uses finite normal expansions along minimal analytic carriers and an ergodic version of Ziglin's leading-term argument to obtain independent invariant normal polynomials, whose Poisson geometry makes all tangent Lyapunov exponents of the carriers vanish.

    In particular, we prove that a real-analytic Liouville integrable geodesic flow on a closed real-analytic Riemannian manifold has zero topological entropy, giving a positive answer to the conjecture proposed by Paternain in 1992.
\end{abstract}
\maketitle

\section{Introduction}\label{sec:introduction}

The relation between complete integrability and the complexity of a Hamiltonian flow is a classical problem in dynamics. On each compact connected component of a regular common level of a complete family of commuting first integrals, the Liouville--Arnold theorem describes the flow as a translation on a torus~\cite{Arnold1989}. This description implies zero topological entropy on the regular invariant tori. However, it does not determine the dynamics on the singular levels of the moment map, because an invariant probability measure may be supported entirely on such a level. Thus, to obtain the zero topological entropy from the complete integrability requires information that is not contained in the regular action--angle coordinates.

The topology of the configuration manifold provides another approach to integrability. Kozlov \cite{Kozlov1979} in 1979 proved that an analytic natural mechanical system on a closed orientable surface of genus greater than one has no additional global analytic first integral independent of the energy. In particular, such a surface does not admit an analytically integrable geodesic flow. Taimanov \cite{Taimanov1987} in 1987 confirmed obstructions in arbitrary dimension. He introduced geometric simplicity and proved that it follows from real-analytic integrability. Consequently, the fundamental group of a closed manifold that admits an analytically integrable geodesic flow must be virtually Abelian, and its first Betti number does not exceed its dimension. These results put restrictions on the underlying manifold but do not automatically determine the entropy of the flow on its singular invariant sets.

Paternain \cite{Paternain1991} in 1991 introduced an entropy approach to this problem. He proved that a completely integrable Hamiltonian system with two degrees of freedom has zero topological entropy on a compact regular energy hypersurface if the additional integral is analytic there, or if the connected components of its critical set are submanifolds. His argument in \cite{Paternain1991} combines the existence of horseshoes for non-singular flows of positive entropy in dimension three, with the obstruction to transverse homoclinic trajectories induced by an analytic first integral~\cite{Katok1980, Moser1973}.

In \cite{Paternain1992}, Paternain related the vanishing of the topological entropy of geodesic flows to the topology of the configuration manifolds. In addition to sub-exponential growth of the fundamental group, zero entropy implies rational ellipticity when the fundamental group is finite.
His Conjecture~1.1 of \cite{Paternain1992} in 1992 asks whether, for geodesic flows, complete integrability implies polynomial growth of the fundamental group, and implies rational ellipticity in the case of finite fundamental group. On page~111 of the same paper, he explicitly conjectured that \emph{a real-analytically integrable geodesic flow on a compact manifold has zero topological entropy, while observing that the corresponding smooth assertion seemed unlikely}.
He proved in 1994 zero entropy under the hypothesis of periodic first integrals, and established it for completely integrable Hamiltonians with nondegenerate first integrals \cite{Paternain1994}. The latter result applies to compact invariant sets in arbitrary dimension, but its nondegeneracy assumption is an additional condition on the singular orbits. The same paper explicitly identified the unresolved issue: \emph{in higher dimensions, can the singular set of an analytically integrable system carry positive topological entropy}?

This entropy question is not purely dynamical. Dinaburg's classic result \cite{Dinaburg1971} in 1971 turns exponential growth of the fundamental group into positive topological entropy for geodesic flows. Later, Liu, Wang, and Wu \cite{LWW2020} in 2020 generalized this result to autonomous positive definite Lagrangian systems. When the fundamental group is finite, Paternain ~\cite[Theorem~3.2]{Paternain1992} in 1992 showed that zero topological entropy of the geodesic flow implies rational ellipticity of the manifold. Thus, an affirmative answer supplies both a dynamical obstruction to analytic integrability and strong restrictions on the topology of the underlying manifold.

The distinction between smooth and analytic integrability was first revealed in Butler's examples \cite{Butler1999} in 1999. He constructed smoothly integrable geodesic flows on compact nil-manifolds whose fundamental groups are not virtually Abelian. These examples have zero topological entropy, while the topological restrictions for analytic integrability need not hold in the smooth category. Bolsinov and Taimanov \cite{BTInventiones2000} in 2000 then constructed a real-analytic metric on a compact three-dimensional Sol-manifold whose geodesic flow is Liouville integrable with respect to smooth first integrals and has positive topological entropy. In their example, every invariant smooth probability measure with a positive density with respect to volume has zero metric entropy. A singular invariant subsystem carries the positive entropy, which is a suspension of a hyperbolic toral automorphism containing a transverse two-torus. The time-one map preserves this torus, and its restriction to the torus is the hyperbolic automorphism.

One of the additional smooth integral in Bolsinov and Taimanov's construction contain a flat factor multiplying an oscillatory function of a logarithmic momentum coordinate. Such functions extend smoothly across the singular set but are not analytic there. The impossibility of an alternative complete analytic family of first integrals follows from Taimanov's topological obstruction. Thus, the analyticity of the metric alone cannot replace the analyticity of the integrals. In their subsequent paper \cite{BT2000} in 2000, Bolsinov and Taimanov extended the construction and restated the real-analytic version of Paternain's conjecture.

Further developments illustrate the range of smooth integrable chaos. Butler \cite{Butler2004} in 2004 related integrable systems with positive entropy on torus bundles to periodic Toda lattices, obtaining geodesic Hamiltonians for particular families. Liu, Chen, and Zhang \cite{LCZ2009} in 2009, Liu and Zhang \cite{LZ2010} in 2010, and Matveev \cite{Matveev2012} in 2012 considered natural Hamiltonians on torus suspensions with a periodic potential in the base direction. Their constructions give smoothly integrable systems of positive entropy above the potential barrier. More recently, Bolsinov, Veselov, and Ye \cite{Bolsinov2021} in 2021 studied geodesic flows on quotients of $\operatorname{PSL}(2,\R)$. They found two separate open regions. The flow is analytically integrable in one region, while in the other region, it has positive entropy but admits no additional independent smooth first integral. Thus, integrability on one open region does not establish the existence of a complete analytic family that is generically independent on a connected neighborhood of the entire energy hypersurface.

The geometry of singular integrable systems has also been studied extensively. Zung ~\cite{Zung1996} in 1996 established torus actions near nondegenerate singular leaves and, under an additional topological stability assumption, topological decompositions up to finite coverings. He also obtained torus actions for certain degenerate corank-one singularities~\cite{Zung2000} in 2000 and near compact singular orbits satisfying a finite-type condition~\cite{Zung2003} in 2003. His analytic normalization theorem~\cite{Zung2005} in 2005 requires no nondegeneracy assumption and establishes convergent Birkhoff normalization near an equilibrium. The geometric linearization theorems~\cite[Theorems~4.3 and~4.5]{Zung2015} by Zung in 2015 concern nondegenerate singularities of integrable dynamical systems. Here we study entropy on a compact regular energy hypersurface while allowing arbitrary degeneracies of the analytic moment map.

This question will be settled down in this paper through finite normal expansions of the first integrals. We prove the following Hamiltonian result and its consequence for geodesic flows.

\begin{theorem}\label{thm:hamiltonian-entropy}
    Let $(P^{2n},\omega)$ be a real-analytic symplectic manifold and $H\in C^\omega(P)$. For a regular value $e$ of $H$, let $\Sigma$ be a compact connected component of $H^{-1}(e)$.  Suppose that there are real-analytic functions
    \begin{displaymath}
        F_1=H,\ F_2,\ldots,F_n
    \end{displaymath}
    defined on a connected open neighborhood $U\subset P$ of $\Sigma$ such that
    \begin{displaymath}
        \{F_i,F_j\}=0\quad(1\leq i,j\leq n),\qquad d_PF_1\wedge\cdots\wedge d_PF_n\not\equiv0\quad\text{on} ~U.
    \end{displaymath}
    If $\varphi^t$ is the Hamiltonian flow of $H$, then
    \begin{displaymath}
        h_{\mathrm{top}}(\varphi^1|_\Sigma)=0.
    \end{displaymath}
\end{theorem}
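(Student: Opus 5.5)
We argue by contradiction, using the variational principle together with Ruelle's inequality. If $h_{\mathrm{top}}(\varphi^1|_\Sigma)>0$, there is an ergodic $\varphi^t$-invariant Borel probability measure $\mu$ on $\Sigma$ with $h_\mu(\varphi^1)>0$. Ruelle's inequality then gives a positive Lyapunov exponent for $\mu$. The goal is therefore to show that every ergodic invariant measure on $\Sigma$ has all Lyapunov exponents equal to zero. On the open dense set where $dF_1\wedge\cdots\wedge dF_n\neq0$, the Liouville--Arnold theorem shows the dynamics is conditionally periodic, so all exponents vanish there. The difficulty is the analytic singular set $S=\{dF_1\wedge\cdots\wedge dF_n=0\}\cap\Sigma$, which may carry $\mu$.

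To handle $S$, we first take a minimal analytic carrier of $\mu$. Among the closed real-analytic subsets (or semianalytic strata) of $U$ that are invariant under the flow and contain $\operatorname{supp}\mu$ up to a null set, the Noetherian property of analytic germs lets us choose a minimal one, $Z$. Passing to its smooth locus, we may assume that $\mu$-almost every point lies in a real-analytic invariant submanifold $Z$ of dimension $k$.

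Next we expand the $F_i$ along $Z$ in the normal directions. In a tubular neighbourhood with coordinates $(z,y)$, $y$ normal, each $F_i$ has a convergent expansion $F_i=\sum_m P_{i,m}(z;y)$ with $P_{i,m}$ homogeneous of degree $m$ in $y$. Following Ziglin, we take leading terms. Since the $F_i$ are generically independent, a finite-order truncation of suitable polynomial combinations $G_1,\dots,G_n$ of the $F_i$ has leading terms that are functionally independent; this uses Ziglin's lemma, replacing $F_j$ by $F_j-c\,F_i^{a}$ to cancel dependent leading parts. The ergodic version enters here. The functions along $Z$ that serve as coefficients in these combinations are flow-invariant, so by ergodicity they are $\mu$-a.e.\ constant, and the cancellation can be performed uniformly on a set of full measure. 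The result is a family of leading homogeneous polynomials on the normal bundle $N Z$ that are invariant under the linearized flow, independent, and in involution for the induced linear Poisson structure. Those of degree zero restrict to $n$ integrals along $Z$, and the higher-degree ones act on the normal fibres.

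The final step converts these invariants into vanishing exponents. The tangent cocycle splits into a part along $Z$ and the normal cocycle. Along $Z$, the invariant leading terms give enough commuting independent integrals of the restricted Hamiltonian system (after possibly a symplectic reduction, since $Z$ need not be symplectic) to place $\mu$ on a Lagrangian-type invariant torus of the reduced system, which forces zero exponents there. For the normal part, the linear cocycle preserves a family of independent homogeneous polynomials whose joint level sets are compact modulo scaling. An Oseledets vector with positive exponent would have to grow exponentially while keeping every invariant polynomial fixed, and these are commuting quadratic or higher Hamiltonians generating a linear action on the fibre. This is ruled out by symplectic pairing (exponents come in $\pm$ pairs) combined with the invariance of a complete set of independent polynomials, which forces the cocycle to be conjugate to a subexponential (unipotent times compact) form. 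I expect this step to be the main obstacle, namely ensuring that the leading polynomials are genuinely independent on the normal fibres and nondegenerate enough to exclude hyperbolic blocks; an invariant indefinite quadratic form such as $y_1y_2$ would permit hyperbolicity. If this fails, the plan is to use the minimality of $Z$: hyperbolic normal directions would produce, via analyticity, a smaller invariant analytic set carrying $\mu$ (stable and unstable leaves contained in level sets of the $F_i$), which contradicts minimality. Once all exponents vanish for every ergodic $\mu$, Ruelle's inequality gives $h_\mu=0$, and the variational principle gives $h_{\mathrm{top}}(\varphi^1|_\Sigma)=0$.
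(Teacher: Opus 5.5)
Your plan sets a goal that is false: it is \emph{not} true that every ergodic invariant measure on $\Sigma$ has all Lyapunov exponents zero, so the route ``all exponents vanish, then Ruelle'' cannot succeed. Take an analytic torus of revolution, $H=\frac12\bigl(p_s^2+p_\theta^2/r(s)^2\bigr)$, with the Clairaut integral $F_2=p_\theta$. The inner equator $s=s_0$, where $r$ has a nondegenerate minimum $r_0$, is a closed geodesic. Along it the curvature $-r''/r$ is negative, so the geodesic is hyperbolic. Any ergodic measure on its lift to $S^*M$ therefore has a positive exponent and zero entropy. Here the carrier is that orbit itself, which already has the least possible dimension $1$, so your minimality fallback cannot produce a contradiction. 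Stable and unstable manifolds are larger invariant sets, not smaller ones. Moreover, along this orbit $dH=\pm r_0^{-1}\,dp_\theta$. After you eliminate this linear relation, the leading normal term of $H-\frac12\mp r_0^{-1}(p_\theta\mp r_0)$ restricts on the $(s,p_s)$-plane to $\frac12p_s^2-\frac{r''(s_0)}{2r_0}(s-s_0)^2$. This is exactly the indefinite $y_1y_2$ situation you worried about, and it genuinely occurs. Complete commuting invariant polynomials do not force a subexponential normal cocycle.

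The missing idea is to control only the exponents \emph{tangent} to the carrier, and then to show separately that positive entropy forces an unstable direction tangent to it.

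\emph{Tangent exponents.} On the minimal carrier $N$ all integrals are constant, so there are no useful degree-zero terms, contrary to your plan. The ergodic Ziglin elimination then needs three things your $F_j-cF_i^a$ step does not supply:
\begin{itemize}
\item general constant-coefficient weighted-homogeneous relations $R$;
\item the zero-set principle on $N$, to pass from $\mu$-a.e.\ to everywhere;
\item a termination argument, via the integer $\operatorname{ord}_N\Omega-\sum_i a_i+n$ and the exact identity $\Omega'=Q\Omega$.
\end{itemize}
The output is $n$ commuting leading polynomials whose fiber gradients span an $n$-dimensional $\Pi_x$-isotropic subspace of $W_x^*$. This forces $N$ to be isotropic and places the Poisson radical $\mathcal C_x$ inside every gradient span. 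The map $u\mapsto\omega_x(u,\cdot)$ is a uniformly bounded equivariant isomorphism $T_xN\to\mathcal C_x$. The zero total Lyapunov weight of the invariant polynomials puts $\mathcal C_x$ in the zero-exponent subspace of the dual normal cocycle. Hence exponents along $TN$ vanish, while transverse ones need not.

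\emph{Entropy step.} This is not Ruelle's inequality but a Ledrappier--Young argument. Unstable conditional measures of a positive-entropy measure are non-atomic. Points with $T_xN\cap E^u(x)=\{0\}$ are isolated in their unstable leaf. So $\mu(N)=1$ forces $T_xN\cap E^u(x)\ne\{0\}$ almost everywhere, which contradicts the vanishing of the tangent exponents.
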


The independence assumption in Theorem~\ref{thm:hamiltonian-entropy} is made in the ambient neighborhood $U$. In particular, we do not assume that the given additional integrals are generically independent after restriction to $\Sigma$, or even that the ambient exterior product is nonzero at some point of $\Sigma$.

Geodesic flows form an important class of Hamiltonian systems. Let $(M^n,g)$ be a complete Riemannian manifold. Its geodesic flow $\phi^t$ is defined for every $t\in\mathbb{R}$ by
\begin{displaymath}
    \phi^t:TM\longrightarrow TM,\qquad (x,v)\longmapsto \left(c_v(t),c_v'(t)\right).
\end{displaymath}
where $c_v$ is the unique geodesic satisfying $c_v(0)=x$ and $c_v'(0)=v$. Since geodesics have constant speed, $\phi^t$ preserves the unit tangent bundle $T^1M$. Geodesics satisfy the Euler--Lagrange equations associated with the kinetic energy Lagrangian
\begin{displaymath}
    L(x,v)=\frac12 g_x(v,v)=\frac12{\|v\|}^2.
\end{displaymath}
This Lagrangian is strictly convex and superlinear in the velocity variable, and $T^1M=L^{-1}(\frac{1}{2})$. The Legendre transform is the diffeomorphism
\begin{displaymath}
    \mathcal{L}:TM\longrightarrow T^*M,\qquad (x,v)\longmapsto \bigl(x,g_x(v,\cdot)\bigr).
\end{displaymath}
The corresponding Hamiltonian is
\begin{displaymath}
    H(x,p)=\frac{1}{2} g_x^{-1}(p,p).
\end{displaymath}
Here $g_x^{-1}$ denotes the dual metric on $T_x^*M$. In local coordinates, its matrix is the inverse of the symmetric positive definite matrix representing $g_x$. The Legendre transform conjugates the geodesic flow to the Hamiltonian flow of $H$ on $T^*M$, equipped with its canonical symplectic form. It maps $T^1M$ onto the unit cotangent bundle $S^*M=H^{-1}(\frac12)$. See~\cite{Arnold1989, Paternain1999} for more details.

\begin{theorem}\label{thm:geodesic-entropy}
    Let $(M^n,g)$ be a closed, connected real-analytic Riemannian manifold, where the dimension $n\geq2$, and let
    \begin{displaymath}
        H(x,p)=\frac12g_x^{-1}(p,p),\qquad (x,p)\in T^*M,
    \end{displaymath}
    be its geodesic Hamiltonian. Suppose that there exist a connected open neighborhood $U\subset T^*M$ of the unit cotangent bundle $S^*M=H^{-1}(1/2)$ and real-analytic functions
    \begin{displaymath}
        F_1=H,\ F_2,\ldots,F_n
    \end{displaymath}
    which Poisson commute and satisfy
    \begin{displaymath}
        dF_1\wedge\cdots\wedge dF_n\not\equiv0\quad\text{on }U.
    \end{displaymath}

    Then the unit speed geodesic flow $\phi^t$ on $T^1M$ has zero topological entropy:
    \begin{displaymath}
        h_{\mathrm{top}}(\phi^1)=0.
    \end{displaymath}
\end{theorem}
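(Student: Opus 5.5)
The plan is to deduce Theorem~\ref{thm:geodesic-entropy} from Theorem~\ref{thm:hamiltonian-entropy}, which is already stated, by checking its hypotheses and then transporting the conclusion through the Legendre transform. The ingredients are as follows.

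\textbf{Setting.} We take $P=T^*M$ with its canonical symplectic form $\omega=d\lambda$. Because $M$ is a real-analytic manifold, $T^*M$ has an induced real-analytic structure in which $\omega$ is real-analytic. Because $g$ is real-analytic, the inverse matrix $g^{-1}$ has real-analytic entries, so $H(x,p)=\frac12 g_x^{-1}(p,p)$ lies in $C^\omega(T^*M)$.

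\textbf{Regularity of the energy value.} The value $e=\tfrac12$ is regular for $H$. The fiber derivative at $(x,p)$ is $\partial_p H=g_x^{-1}(p,\cdot)$. This is nonzero whenever $p\neq0$, and $p\neq 0$ holds on $H^{-1}(\frac12)$.

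\textbf{Compactness and connectedness of $S^*M$.} The set $S^*M=H^{-1}(\frac12)$ is compact, since it is a sphere bundle over the closed manifold $M$. It is connected: $M$ is connected and each fiber $S^{n-1}$ is connected because $n\geq2$. Hence $S^*M$ is itself a compact connected component of $H^{-1}(\frac12)$, and we set $\Sigma=S^*M$.

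\textbf{Remaining hypotheses.} The neighborhood $U$, the commutation relations $\{F_i,F_j\}=0$, and the generic independence $dF_1\wedge\cdots\wedge dF_n\not\equiv0$ on $U$ are assumed verbatim in Theorem~\ref{thm:geodesic-entropy}. Theorem~\ref{thm:hamiltonian-entropy} therefore applies and gives $h_{\mathrm{top}}(\varphi^1|_{S^*M})=0$, where $\varphi^t$ is the Hamiltonian flow of $H$.

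\textbf{Transfer to the geodesic flow.} As recalled above, the Legendre transform $\mathcal{L}$ conjugates the geodesic flow to $\varphi^t$, that is, $\mathcal{L}\circ\phi^t=\varphi^t\circ\mathcal{L}$. Moreover, $\mathcal{L}$ maps $T^1M$ homeomorphically onto $S^*M$. The correspondence of level sets is exact: since $\|v\|^2=g^{-1}(g(v,\cdot),g(v,\cdot))$, we have $L(x,v)=H(\mathcal{L}(x,v))$. Topological entropy is invariant under topological conjugacy of maps on compact metric spaces, so $h_{\mathrm{top}}(\phi^1|_{T^1M})=h_{\mathrm{top}}(\varphi^1|_{S^*M})=0$.

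\textbf{Main obstacle.} The only point needing care is connectedness of $\Sigma$, which Theorem~\ref{thm:hamiltonian-entropy} requires; this is exactly where $n\geq2$ is used. Even without it, one could apply Theorem~\ref{thm:hamiltonian-entropy} to each of the finitely many compact components, each of which has a connected neighborhood contained in the given $U$. All the genuine difficulty resides in Theorem~\ref{thm:hamiltonian-entropy}, which we take as given here.
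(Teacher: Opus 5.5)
Your proposal is correct and matches the paper's proof essentially step for step: you check that $S^*M$ is a compact, connected (using $n\geq2$), regular energy level of the analytic Hamiltonian $H$, apply Theorem~\ref{thm:hamiltonian-entropy}, and transfer the conclusion to $T^1M$ through the Legendre transform conjugacy. Your side remark about treating several components separately is not needed here, but it is sound: by the identity theorem, the wedge product cannot vanish identically on any nonempty open subset of the connected set $U$.
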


Theorem~\ref{thm:geodesic-entropy} gives an affirmative answer to the real-analytic version of Paternain's conjecture of \cite{Paternain1992} and restated in \cite{BT2000}. It allows arbitrary singularities of the moment map and does not require nondegeneracy or generic independence on the chosen energy hypersurface.

We clarify the independence convention. On a connected real-analytic domain, the condition $dF_1\wedge\cdots\wedge dF_n\not\equiv0$ is equivalent to independence on an open dense set and to independence almost everywhere with respect to any smooth positive density. This is the same meaning of functional independence as used almost everywhere by Bolsinov and Taimanov \cite[pp.~640--641]{BTInventiones2000}, when the comparison is made in the same domain.

There is a distinction between this convention in an ambient neighborhood and its energy-level version. If $\Sigma=H^{-1}(e)$ is regular, then at each $x\in\Sigma$,
\begin{displaymath}
    dH\wedge dF_2\wedge\cdots\wedge dF_n\big|_x\ne0 \quad\Longleftrightarrow\quad d(F_2|_\Sigma)\wedge\cdots\wedge d(F_n|_\Sigma)\big|_x\ne0.
\end{displaymath}

Here, the exterior product on the left is an ambient covector, not its pullback to $\Sigma$. Ambient generic independence need not imply generic independence of the chosen restrictions at every energy. For example, replacing a commuting integral $G$ by $(H-e)G$ may preserve ambient generic independence while making the restriction identically zero on $\Sigma$.  Theorem~\ref{thm:hamiltonian-entropy} considers this possibility.

The proof starts with an ergodic invariant measure for the time-one map. Choose one connected embedded real-analytic submanifold of the energy component of the least dimension, which is invariant under the flow with full measure. This carrier need not be closed or compact, and the measure itself need not be invariant under the flow. Minimality implies that every proper, relatively closed analytic subset of the carrier that is invariant under the flow has measure zero. This principle shows that the first integrals vanish identically on the carrier after subtracting their almost everywhere constant values. Analyticity then provides finite, nonzero terms in their full ambient normal expansions. The resulting polynomial family is equivariant with respect to the normal derivative cocycle. This extends the classical leading-term construction for first integrals of variational equations \cite[\S1.1]{Ziglin1982} to normal expansions along the invariant carrier.

The algebraic step is related to Ziglin's argument on the leading term \cite[\S1.2]{Ziglin1982}. For a fixed germ, Ziglin replaces functionally independent integrals by polynomial combinations so that their leading homogeneous terms become independent. In the present setting, the expansion is along a carrier of possibly positive dimension, and the leading polynomials vary with the base point. Hence, pointwise polynomial dependence does not immediately yield a single polynomial relation among the original integrals. We use ergodicity to ensure that the finite dimensional spaces of relations are constant almost everywhere. Their associated analytic sections have invariant zero sets that are relatively closed,  so the minimal-carrier zero-set principle extends these relations to the entire carrier. The unique continuation theorem would not justify this step for a possibly singular measure.  Adapting the descent argument in Ziglin's lemma
\cite[\S1.2]{Ziglin1982} (see also \cite[\S III.6, pp.~75--76]{Audin2008}), we prove that the polynomial replacement procedure terminates by constructing a strictly decreasing nonnegative integer from the normal orders of the integrals and their full ambient differential exterior product. This gives the dynamical version of the leading term argument required here.

Finally, the Poisson relations and the Lyapunov properties of the independent normal polynomials make the tangent Lyapunov exponents on the carrier vanish. The entropy argument is applied to the measure carried by this submanifold, rather than to all ambient exponents, since transverse exponents can be nonzero even in an integrable system of zero entropy. The variational principle then gives Theorem~\ref{thm:hamiltonian-entropy}. The geodesic statement follows by applying it to the unit cotangent bundle and using the Legendre transform. The detailed argument is given in Section~\ref{sec:proof-main-results}.

The role of analyticity is clear when compared with the smooth examples. Set
\begin{displaymath}
    \rho(s)=\begin{cases}
        \exp(-1/s^2),&s\ne0,\\
        0,&s=0.
    \end{cases}
\end{displaymath}

In suitable momentum coordinates, the Bolsinov--Taimanov integral \cite{BTInventiones2000} has the form
\begin{displaymath}
    \rho(p_up_v)\sin\!\left(\frac{2\pi\log|p_u|}{\log\lambda}\right)
\end{displaymath}
away from $p_up_v=0$, where $\lambda>1$ is an eigenvalue of the monodromy, and extends smoothly by zero across that set. It is flat there: all of its normal derivatives vanish. Thus, the integral contributes to the functional independence of this family away from the singular set. Now consider a carrier of a measure of positive entropy that is contained in that set. Along any analytic carrier contained in this set, every normal jet of this integral vanishes. Thus, it supplies no nonzero leading normal polynomial. The analytic argument also allows the entire measure to be carried by the singular locus, but infinite order vanishing cannot hide an analytic integral in this way. Finite normal orders and the full ambient exterior product permit polynomial replacements until the leading terms are independent. Their Poisson commutation constrains the resulting polynomial differentials, while invariance forces the leading tensors to have zero total Lyapunov weight. Together, these facts imply zero tangent exponents on the carrier; zero total weights alone would not suffice. The flat smooth integral supplies no leading tensor to which this argument could be applied. This explains both why singular measures must be treated and why smooth integrability cannot replace analytic integrability in the theorems.

\section{The Proof of the Main Results}\label{sec:proof-main-results}

\subsection{Analytic carriers and ambient normal expansions}\label{sec:analytic-carriers}

Throughout the proof of Theorem~\ref{thm:hamiltonian-entropy}, we use the conventions
\begin{displaymath}
    \iota_{X_J}\omega=d_PJ, \qquad \{J,K\}=\omega(X_J,X_K).
\end{displaymath}
Thus $X_HJ=\{J,H\}$.  Unless a different space is indicated, differentials and orders of vanishing are computed in $U$, not in the energy hypersurface. We write $f=\varphi^1|_\Sigma$.

We will prove that every $f$-ergodic probability measure has zero entropy.

The main point is to study the first nonzero normal terms of the integrals along an analytic submanifold carrying such a measure. The carrier may lie entirely in the singular set of the original moment map. For this reason, independence on a full ambient volume set does not by itself settle the problem.

\subsubsection{The ambient flow along the energy hypersurface}

Since $X_HH=0$, the Hamiltonian vector field is tangent to $\Sigma$. Compactness implies that its restriction to $\Sigma$ is complete. Hence $f$ is a real-analytic diffeomorphism of the compact manifold $\Sigma$. Moreover, $d_PH\ne0$ on $\Sigma$, so the Hamiltonian vector field $X_H$ has no zeros on $\Sigma$.

We also need the derivative of the flow in the full tangent bundle of $P$. Fix $T>0$.  For each $x\in\Sigma$, the orbit segment $\{\varphi^t(x):|t|\leq T\}$ is contained in the compact set $\Sigma \subset U$. Local existence, uniqueness, and continuous dependence for the differential equation give an ambient neighborhood of $x$ on which the solutions exist for $|t|\leq T$ and stay in $U$. A finite cover of $\Sigma$ gives an open neighborhood $V_T\subset U$ with this property. The neighborhood $V_T$ may depend on $T$.

Consequently, for every $x\in\Sigma$ and every $t\in\mathbb R$,
\begin{equation}\label{eq:ambient-derivative}
    A^t(x):=D\varphi^t_x:T_xP\longrightarrow T_{\varphi^t x}P
\end{equation}
is an invertible symplectic map (i.e., preserves the symplectic form $\omega$) and satisfies the cocycle identity. If $\{J,H\}=0$ on $U$, then $J\circ\varphi^t=J$ on the corresponding local flow domain. In particular, this identity holds for ambient germs along $\Sigma$.

\subsubsection{A minimal analytic carrier}

Fix an arbitrary $f$-ergodic Borel probability measure $\mu$ on $\Sigma$. Note that only $f$-invariance and $f$-ergodicity are assumed for $\mu$, and they do not imply invariance under the Hamiltonian flow $\varphi^t$.

Consider the connected embedded real-analytic submanifolds $N'\subset \Sigma$ satisfying:
\begin{displaymath}
    \varphi^t(N')=N'\quad(t\in\mathbb R),\qquad \mu(N')=1.
\end{displaymath}
This collection is nonempty because $\Sigma$ belongs to it. Its dimensions belong to a finite set of integers. Choose a member of the least dimension, denote it by $N$, and put $d=\dim N$. Since an embedded submanifold is a Borel subset, the condition $\mu(N')=1$ is meaningful. Note that the manifold $N$ need not be closed or compact. Since it is invariant under the flow, $X_H(x)\in T_xN$, and therefore $d\geq1$.

\begin{lemma}\label{lem:carrier-zero-set}
    Let $Z$ be a proper, relatively closed real-analytic subset of $N$. If $\varphi^t(Z)=Z$ for all $t\in\mathbb R$, then $\mu(Z)=0$. Consequently, if an analytic section of an analytic vector bundle over $N$ has a flow-invariant zero set and vanishes $\mu$-almost everywhere, then it vanishes identically on $N$.
\end{lemma}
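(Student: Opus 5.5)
We argue by contradiction with the minimality of $d=\dim N$. Suppose $\mu(Z)>0$. The plan is to stratify $Z$ into finitely many (or countably many, locally finitely many) connected embedded real-analytic submanifolds of dimension $<d$. Then we show that one stratum is flow-invariant and has full measure, which contradicts the choice of $N$.

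\emph{Stratification.} Since $Z$ is a proper relatively closed analytic subset of the connected analytic manifold $N$, it has empty interior; otherwise, by the identity principle on connected $N$, the defining analytic equations would vanish on all of $N$. So $\dim Z<d$. We use a canonical stratification rather than an arbitrary one: put $Z_{k}$ equal to the set of points of $Z$ near which $Z$ is a $k$-dimensional analytic submanifold, and peel off the top-dimensional regular part iteratively. That is, $Z^{(0)}=Z$, and $Z^{(j+1)}=Z^{(j)}\setminus\operatorname{Reg}_{\max}(Z^{(j)})$, where $\operatorname{Reg}_{\max}$ denotes the points where $Z^{(j)}$ is smooth of its maximal dimension. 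Each $Z^{(j)}$ is again a closed analytic subset (a semianalytic subset would also suffice; we use the standard fact that the non-maximal-regular locus of an analytic set is analytic of smaller dimension). The procedure terminates after at most $d$ steps. Since $\varphi^t$ is an analytic diffeomorphism of $N$ preserving $Z$, it preserves each canonically defined set $Z^{(j)}$ and each regular part $R_j=Z^{(j)}\setminus Z^{(j+1)}$. Each $R_j$ is an embedded analytic submanifold of dimension $<d$ with countably many connected components.

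\emph{Choosing a component.} Since $\mu(Z)>0$ and $Z=\bigsqcup_j R_j$, some $R_j$ has $\mu(R_j)>0$. Now $R_j$ is $\varphi^t$-invariant, hence $f$-invariant, and $\mu$ is $f$-ergodic, so $\mu(R_j)=1$. The connected components $C_i$ of $R_j$ are countable, and the flow permutes them. Because $t\mapsto\varphi^t$ is continuous and $\varphi^0=\mathrm{id}$, each component is mapped to itself, so each $C_i$ is flow-invariant and in particular $f$-invariant. By ergodicity each $\mu(C_i)\in\{0,1\}$, and by countable additivity some $C_i$ has measure $1$. This $C_i$ is a connected embedded analytic submanifold of $\Sigma$, invariant under the flow, of full measure and of dimension $<d$. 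This contradicts the minimality of $N$, so $\mu(Z)=0$.

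\emph{Consequence.} Let $s$ be an analytic section vanishing $\mu$-a.e. with invariant zero set $Z=s^{-1}(0)$. The set $Z$ is relatively closed and analytic. If $Z\neq N$, the first part gives $\mu(Z)=0$, which contradicts $\mu(Z)=1$. Hence $Z=N$. The main obstacle is the stratification step: we must ensure the strata are canonical, so that they are invariant under the flow, and embedded. I would invoke the standard structure of real-analytic sets (Łojasiewicz), under which the singular locus of an analytic set is contained in a closed analytic, or at least semianalytic, subset of lower dimension. If only semianalyticity is available, the same peeling argument works using the regular locus of maximal dimension, which remains intrinsic.
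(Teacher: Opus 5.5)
Your proposal is correct and follows essentially the same route as the paper: empty interior and $\dim Z<d$, an intrinsic peeling of top-dimensional regular loci that the flow must preserve, connected components fixed by the flow by continuity in $t$, ergodicity producing a full-measure, flow-invariant, lower-dimensional carrier that contradicts minimality, and the same deduction for zero sets of analytic sections. One caution: for real-analytic sets, the complement of the top-dimensional regular locus is in general only semianalytic, not analytic (near a point it can contain a closed half-line whose opposite half consists of regular points), so your ``standard fact'' is false as stated, and you must use the semianalytic version of the peeling (regular loci of semianalytic sets, following Bierstone--Milman) that you mention as a fallback, exactly as the paper does.
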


\begin{proof}
    First, we show that a proper relatively closed analytic subset of the connected analytic manifold $N$ has empty interior.

    To see this directly, let $E=\operatorname{int}_N Z$. By definition, $E$ is an open subset of $N$. Suppose that $E \neq \emptyset$. Then, for each point $p \in \overline{E}^{N}$ (the closure of $E$ in $N$), since $Z$ is an analytic subset, we can choose a connected local analytic coordinate chart (neighborhood) $V$ such that
    \begin{displaymath}
        Z \cap V = \{ g_{1}=\cdot\cdot\cdot=g_{r} = 0\},
    \end{displaymath}
    where $g_{i} \in C^{\omega}(V)$ for each $i \in \{1,\dots,r\}$. Since $p \in \overline{E}^{N}$ and $V$ is a neighborhood of $p$, $V \cap E \neq \emptyset$. While both $E$ and $V$ are open subsets in $N$, $V \cap E$ is a non-empty open subset of $N$, and
    \begin{displaymath}
        E \cap V \subseteq  Z \cap V = \{ g_{1}=\cdot\cdot\cdot=g_{r} = 0\}.
    \end{displaymath}
    Thus, each $g_{i}$ vanishes identically on $E \cap V$. By the connectedness of $V$ and the real analyticity of $g_{i}$, we know that $g_{i}\equiv 0$ on $V$. Therefore
    \begin{displaymath}
        V \subseteq \{ g_{1}=\cdot\cdot\cdot=g_{r} = 0\}=Z \cap V \subseteq Z.
    \end{displaymath}
    Since $V$ is a neighborhood of the point $p$, the above inclusion implies that $p \in E$. Thus $E = \overline{E}^{N}$, i.e., $E$ is both open and closed in $N$, and the connectedness of $N$ implies that either $E = \emptyset$ or $E = N$. If $E = N$, then $Z = N$, which contradicts the fact that $Z$ is a proper subset of $N$. Therefore $E =\operatorname{int}_N Z= \emptyset$, i.e., $Z$ has no interior points. Furthermore, by Bierstone-Milman's stratification theory \cite{BierstoneMilman1988}, we have $\dim Z<d =\dim N$.

    Second, we prove that $Z$ can be decomposed into countably many connected, flow-invariant analytic pieces.

    We will use the analytic regular-locus theorem and the local stratification theorem for semianalytic sets \cite[Corollary~2.11, Theorem~7.2, and Remark~7.3]{BierstoneMilman1988} in the following intrinsic form. For a semianalytic set $X$ of dimension $r$, let
    \begin{displaymath}
        \mathcal R_r(X)=\{x\in X\mid X~\text{is an embedded analytic}~r\text{-manifold near}~x\}.
    \end{displaymath}
    Here ``analytic manifold'' is essential; see \cite[Definitions~3.3 and~7.1]{BierstoneMilman1988}. The set $\mathcal R_r(X)$ is relatively open and semianalytic, and
    \begin{displaymath}
        \dim\left(X\setminus\mathcal R_r(X)\right)<r.
    \end{displaymath}
    We recall the dimension argument to make clear how the stratification is used. Take a compatible locally finite analytic stratification with the frontier condition. At a point of an $r$-dimensional stratum $S$, no other stratum of $X$ can have that point in its frontier: the frontier condition would require a stratum of dimension strictly larger than $r$. Local finiteness permits a neighborhood meeting only finitely many strata, and shrinking this neighborhood away from the closures of the other strata makes $X$ locally equal to $S$. Hence, every top-dimensional stratum lies in $\mathcal{R} _ {r} (X)$, and the complement has a smaller dimension. Semianalyticity of the regular locus is the additional conclusion of \cite[Remark~7.3]{BierstoneMilman1988}.

    Starting with $X_0=Z$, define, as long as $X_j\ne\varnothing$,
    \begin{displaymath}
        r_j=\dim X_j,\qquad X_{j+1}=X_j\setminus\mathcal R_{r_j}(X_j).
    \end{displaymath}

    By Bierstone-Milman's theory as stated above, $\mathcal R_{r_j}(X_j)$ is relatively open and semianalytic, and
    \begin{displaymath}
        \dim\left(X_{j}\setminus\mathcal R_{r_j}(X_j)\right)<r_{j}.
    \end{displaymath}
    Since the dimensions decrease strictly, this procedure stops after finitely many steps. Each connected component of $\mathcal R_{r_j}(X_j)$ is an embedded analytic submanifold. The connected components of the sets $\mathcal R_{r_j}(X_j)$ form a countable decomposition of $Z$ into connected embedded analytic submanifolds of dimension less than $d$. Countability follows because each regular locus is a second-countable manifold and its connected components are open. See \cite{BierstoneMilman1988} for more details.

    This decomposition is defined by the local analytic structure of the set, rather than by a choice of charts. Every analytic diffeomorphism preserving $Z$ therefore preserves each $X_j$ and each regular locus $\mathcal R_{r_j}(X_j)$. In particular, the flow does so. If $x$ belongs to a connected component $C$ of one of these regular loci, the path $t\mapsto\varphi^t(x)$ remains in that regular locus and starts in $C$. It stays in $C$. Applying the same argument to negative time shows $\varphi^t(C)=C$. Thus $Z$ can be decomposed into countably many connected, flow-invariant analytic pieces $C_{l}$; each $C_{l}$ satisfies $\dim C_{l}\leq \dim Z < d$.

    Third, we show that $\mu(Z)=0$.

    Since $Z$ is an $f$-invariant Borel set, the ergodicity shows that $\mu(Z)\in\{0,1\}$. If $\mu(Z)=1$, by the fact that $Z = \bigcup_{l}C_{l}$, i.e., $Z$ is a countable union, there exists an $l_{0}$ such that $\mu(C_{l_{0}})>0$. Since $C_{l_{0}}$ is flow-invariant, it is $f$-invariant, while $\mu$ is an ergodic measure of $f$; this implies that $\mu(C_{l_{0}})=1$. Thus $C_{l_{0}}$ is a connected, flow-invariant, embedded real-analytic submanifold with $\mu(C_{l_{0}})=1$, and $\dim C_{l_{0}} < d =\dim N$. This contradicts the minimal choice of $d$. Hence $\mu(Z)=0$.

    Finally, in a local analytic frame, the zero set of an analytic section is the common zero set of its finitely many coefficients. It is a relatively closed analytic subset. If the section were not identically zero, the assertion just proved would contradict its almost-everywhere vanishing.
\end{proof}

\subsubsection{The integrals are constant on the carrier}

Each $F_i|_\Sigma$ is an $f$-invariant measurable function. For each $i=2,\ldots,n$, ergodicity therefore gives a constant $c_i\in\mathbb R$ such that $F_i=c_i$ $\mu$-almost everywhere.  The set
\begin{displaymath}
    Z_i=\{x\in N:F_i(x)=c_i\}
\end{displaymath}
is relatively closed, analytic, invariant under the flow, and has full measure. Lemma~\ref{lem:carrier-zero-set} gives $Z_i=N$. In particular, $c_1=e$. Set
\begin{displaymath}
    A_i=F_i-c_i\quad(1\leq i\leq n),\qquad \Omega=d_PA_1\wedge\cdots\wedge d_PA_n.
\end{displaymath}
Then
\begin{equation}\label{eq:current-list-properties}
    A_i|_N=0,\qquad \{A_i,A_j\}=0,\qquad \{A_i,H\}=0,\qquad \Omega\not\equiv0~\text{on}~U.
\end{equation}

\subsubsection{Order of vanishing in the full ambient space}

Let $W=TP|_N\Big/TN$ be the normal bundle of $N$; its rank is $q=\operatorname{rank}W=2n-d$. For $x\in N$, write $\mathcal O_{U,x}$ for the local ring of real-analytic function germs and $\mathcal I_{N,x}$ for the ideal of germs vanishing on the germ of $N$ at $x$.

Since $N$ is an embedded real-analytic submanifold, for each $x_{0}\in N$, there exist an open neighborhood $U_{x_{0}}\subset U$ and adapted analytic local coordinates $(y,z)\in\mathbb R^d\times\mathbb R^q$, such that
\begin{displaymath}
    x_{0}=(0,0),\qquad N \cap U_{x_{0}}=\{z=0\}.
\end{displaymath}
In these coordinates, any $g \in \mathcal O_{U,x_{0}}$ can be written as
\begin{displaymath}
    g(y,z)=\sum_{\alpha\in \mathbb{N}^{q}}c_{\alpha}(y)z^{\alpha},
\end{displaymath}
where $c_{\alpha}$ is a real-analytic function defined in a neighborhood $V$ of $0 \in\mathbb R^d$. The condition $f\mid_{N}=0$ is equivalent to the fact that $c_{0}(y)=0$. Thus there exist $g_{j}\in \mathcal O_{U,x_{0}}$ ($j=1,\ldots,q$), such that
\begin{displaymath}
    g(y,z)=\sum^{q}_{j=1}z_{j}g_{j}(y,z).
\end{displaymath}
Therefore we have
\begin{equation}\label{eq1}
    \mathcal I_{N,x_{0}} = (z_{1},\ldots,z_{q})\cdot \mathcal O_{U,x_{0}}.
\end{equation}

For any nonnegative integer $a \in \mathbb{N}$, in the adapted coordinates above, the $a$-th power of $\mathcal I_{N,x}$ can be written as
\begin{displaymath}
    \begin{aligned}
        \mathcal I_{N,x}^{a} & =  \mathcal I_{N,x} \cdot \mathcal I_{N,x}\cdot\cdot\cdot\mathcal I_{N,x}  \\
                             & = \Big(z^{\beta}~\big |~|\beta|=a\Big)\mathcal O_{U,x}.        \\
    \end{aligned}
\end{displaymath}
In particular, $\mathcal I_{N,x}^{0}=\mathcal O_{U,x}$.

More generally, let $\mathcal{E} \to U$ be an analytic vector bundle, and let $s$ be an analytic section. Its normal order along $N$ is
\begin{equation}\label{eq:order-definition}
    \operatorname{ord}_N s=\sup\{a\in\mathbb N: s_x\in\mathcal I_{N,x}^{a}\mathcal E_x~\text{for every}~x\in N\},
\end{equation}
where $\mathcal E_x$ on the right denotes the module of analytic section germs. We put $\operatorname{ord}_N0=+\infty$.  Multiplication by an invertible analytic change-of-frame matrix preserves the condition in \eqref{eq:order-definition}, so it is independent of the chosen frame.

\begin{lemma}\label{lem:finite-normal-order}
    A nonzero analytic section on the connected set $U$ has finite normal order along $N$.
\end{lemma}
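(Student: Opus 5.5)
Suppose, for contradiction, that $s\not\equiv0$ on $U$ but $\operatorname{ord}_N s=+\infty$, i.e.\ $s_x\in\mathcal I_{N,x}^a\mathcal E_x$ for every $x\in N$ and every $a\in\mathbb N$. The plan is to show that this forces $s$ to vanish on a nonempty open subset of $U$. Since $U$ is connected and $s$ is analytic, the identity theorem then gives $s\equiv0$ on $U$, which is the desired contradiction.

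First localize. Since $N\neq\varnothing$ (indeed $\mu(N)=1$), fix any $x_0\in N$. Choose adapted coordinates $(y,z)$ on $U_{x_0}$ with $N\cap U_{x_0}=\{z=0\}$, and an analytic frame of $\mathcal E$ near $x_0$. In this frame $s$ becomes a vector of analytic functions $s^{(k)}(y,z)$, and the condition on the germ at $x_0$ reads
\begin{displaymath}
    s^{(k)}\in(z^\beta:|\beta|=a)\,\mathcal O_{U,x_0}\quad\text{for all }a,
\end{displaymath}
by \eqref{eq1} and the description of $\mathcal I_{N,x}^a$. Hence every Taylor coefficient of $s^{(k)}$ at $x_0=(0,0)$ vanishes. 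Indeed, any monomial $y^\gamma z^\beta$ appearing in an element of the ideal generated by $\{z^\beta:|\beta|=a\}$ has $|\beta|\geq a$, and $a$ is arbitrary. (Equivalently, the expansion $\sum_\alpha c_\alpha(y)z^\alpha$ has every $c_\alpha\equiv0$ near $y=0$.) Only the single point $x_0$ is needed here, not all of $N$.

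Since $s^{(k)}$ is real-analytic at $x_0$, it agrees with its convergent Taylor series on a neighborhood of $x_0$. All coefficients are zero, so each $s^{(k)}$ vanishes on an open neighborhood of $x_0$ in $U$. Now apply the identity theorem on the connected analytic manifold $U$, via the same open–closed argument used in the first part of the proof of Lemma~\ref{lem:carrier-zero-set}. The interior of the zero set of $s$ is open. It is also closed in $U$, because at a limit point one again obtains vanishing of all Taylor coefficients in a local frame. It is nonempty, so it equals $U$, giving $s\equiv0$.

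The main (mild) obstacle is the bookkeeping step converting ``$s_{x_0}$ lies in every power of the ideal'' into ``all Taylor coefficients vanish.'' This step requires care with the frame: membership in $\mathcal I_{N,x_0}^a\mathcal E_{x_0}$ is frame-independent, so we may compute componentwise. It also requires the observation that this membership is a statement about germs, which yields an honest open neighborhood through convergence of the power series. Krull's intersection theorem $\bigcap_a\mathfrak m^a=0$ in the Noetherian local ring $\mathcal O_{U,x_0}$, with $\mathcal I_{N,x_0}\subset\mathfrak m$, gives the same conclusion in one line and could be cited instead.
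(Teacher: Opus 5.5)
Your proposal is correct and follows essentially the paper's argument. You localize at one point $x_0\in N$ using adapted coordinates and an analytic frame, use membership of the germ in every power of $(z_1,\ldots,z_q)$ to conclude that the germ vanishes, and finish with the identity theorem on the connected set $U$. The only differences are cosmetic: the paper works with the coefficients $c_{j,\alpha}(y)$ of the $z$-expansion rather than the full Taylor coefficients at $x_0$, and your Krull-intersection remark is an equivalent one-line alternative.
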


\begin{proof}
    Fix $x_{0}\in N$ and adapted analytic coordinates $(y,z)\in\mathbb R^d\times\mathbb R^q$ centered at $x_{0}$ as above. Let $e_{1},e_{2},\ldots,e_{r}$ be a local analytic frame of $\mathcal E$ on $U_{x_{0}}$; in this frame $s$ can be written as
    \begin{displaymath}
        s=\sum^{r}_{j=1}s_{j}e_{j},\qquad s_{j}\in C^{\omega}(U_{x_{0}}),~j=1,\ldots,r.
    \end{displaymath}
    Each $s_{j}$ can be written in the following form in the adapted coordinates
    \begin{displaymath}
        s_{j}(y,z)=\sum_{\alpha\in \mathbb{N}^{q}}c_{j,\alpha}(y)z^{\alpha},\qquad j=1,\ldots,r,
    \end{displaymath}
    where $c_{j,\alpha}$ is an analytic function on the neighborhood of $0 \in \mathbb{R}^{d}$. Furthermore, $s$ can be written as
    \begin{displaymath}
        s=\sum^{r}_{j=1}\Big(\sum_{\alpha\in \mathbb{N}^{q}}c_{j,\alpha}(y)z^{\alpha} \Big)e_{j},\qquad  s_{j}\in C^{\omega}(U_{x_{0}}),~j=1,\ldots,r.
    \end{displaymath}

    Suppose that $\operatorname{ord}_N s =+\infty$. By definition, for each $a \in \mathbb{N}$, we have $s_{x_{0}}\in\mathcal I_{N,x_{0}}^{a}\mathcal E_{x_{0}}$. As we have seen in \eqref{eq1}, in the adapted coordinates, $\mathcal I_{N,x_{0}} = \big(z_{1},\ldots,z_{q}\big)$, and
    \begin{displaymath}
        \mathcal I_{N,x_{0}}^{a}\mathcal E_{x_{0}} = \left\{ \sum^{r}_{j=1}f_{j}e_{j}~\mid~f_{j} \in (z_{1},\ldots,z_{q})^{a} \right\}.
    \end{displaymath}

    Thus,
    \begin{displaymath}
        \begin{aligned}
            s_{x_{0}}\in\mathcal I_{N,x_{0}}^{a}\mathcal E_{x_{0}} & \Longleftrightarrow  s_{j} \in (z_{1},\ldots,z_{q})^{a}  \\
                                                                   & \Longleftrightarrow c_{j,\alpha}(y) = 0, \quad |\alpha|< a.\\
        \end{aligned}
    \end{displaymath}
    The fact that $\operatorname{ord}_N s =+\infty$ implies that the above equality is valid for every $a\in \mathbb{N}$; thus for all $j$ and $\alpha$, we have $c_{j,\alpha}(y) = 0$. Furthermore $s \equiv 0$ in $U_{x_{0}}$, where $U_{x_{0}}\subset U$ is the adapted coordinate neighborhood of $x_{0}$ above. By the identity theorem for real-analytic functions, we get that $s \equiv 0$ in $U$, which contradicts the assumption that $s$ is not identically zero.
\end{proof}

In particular,
\begin{displaymath}
    a_i:=\operatorname{ord}_N A_i\in\mathbb N_{>0},\qquad w:=\operatorname{ord}_N\Omega\in\mathbb N
\end{displaymath}
are finite. Here $\Omega$ is a section of $\Lambda^nT^*U$. It is not the pullback of that section to $N$ or to $\Sigma$. For the initial list, the regularity of the energy gives
\begin{displaymath}
    \operatorname{ord}_N(H-e)=1.
\end{displaymath}

\subsubsection{The leading normal polynomial and its transformation law}\label{subsec:1}

Suppose $J$ is an analytic function of finite normal order $a\geq1$. In adapted coordinates, write:
\begin{displaymath}
    J(y,z)=\sum_{|\alpha|\geq a}c_\alpha(y)z^\alpha,\qquad p_y(z)=\sum_{|\alpha|=a}c_\alpha(y)z^\alpha.
\end{displaymath}
For $x=(y,0)$ and $v\in W_x$, represented in these coordinates by its normal component $v_z$, define
\begin{equation}\label{eq:first-normal-polynomial}
    P_J(x;v)=p_y(v_z)=\frac{1}{a!}\frac{d^a}{dt^a}|_{t=0}J(y,tv_z).
\end{equation}

We identify a homogeneous polynomial with the symmetric multilinear form $T$ normalized by $T(v,\ldots,v)=P_J(x;v)$. Thus, the expression in \eqref{eq:first-normal-polynomial} is the polynomial value, not an unnormalized derivative tensor. For the function $H-e$, we have
\begin{displaymath}
    P_{H-e}(x;v)=d_PH_x(v_{x}).
\end{displaymath}

The last formula is well-defined because $d_PH$ annihilates $T_xN$. It is nonzero because $d_PH_x\ne0$.

To check the independence of coordinates, let
\begin{displaymath}
    (y,z)\in\mathbb R^d\times\mathbb R^q, \qquad (y',z')\in\mathbb R^d\times\mathbb R^q
\end{displaymath}
be two adapted charts, i.e.,
\begin{equation}\label{eq3}
    N = \{z=0\} = \{z'=0\}.
\end{equation}
The coordinate transformations are given by
\begin{equation}\label{eq4}
    y'=\psi(y,z),\qquad
    z'=\xi(y,z).
\end{equation}

By~\eqref{eq3}, we know $\xi(y,0)=0$; thus
\begin{displaymath}
    \xi(y,z)=\xi(y,0) + D_{z}\xi(y,0)z + O(|z|^{2})=0+ B(y)z + O(|z|^{2}),
\end{displaymath}
where $B(y):=D_{z}\xi(y,0)$. Furthermore, the coordinate transition $\Psi=(\psi,\xi)$ is a real-analytic diffeomorphism. The fact that $\xi(y,0)=0$ implies that its derivative along $N$ is
\begin{displaymath}
    D\Psi(y, 0)=
    \begin{pmatrix}
        D\psi_{0}(y) & D_{z}\psi(y,0)\\
        0 & B(y)
    \end{pmatrix},
\end{displaymath}
which is invertible, where $\psi_{0}(y):=\psi(y,0)$. Thus both $D\psi_{0}(y)$ and $B(y)$ are invertible.

Similarly, we have
\begin{displaymath}
    \psi(y,z)=\psi(y,0) + O(|z|)=\psi_{0}(y)+ O(|z|),
\end{displaymath}
where, as stated above, $\psi_{0}(y):=\psi(y,0)$.

Thus we can write~\eqref{eq4} in the following form
\begin{equation}\label{eq5}
    y'=\psi_{0}(y)+ O(|z|),\qquad
    z' = B(y)z + O(|z|^{2}).
\end{equation}
Its inverse has the form
\begin{equation}\label{eq6}
    y=\psi_0^{-1}(y')+O(|z'|),\qquad
    z=B(\psi_0^{-1}(y'))^{-1}z'+O(|z'|^2).
\end{equation}

Suppose $a=\operatorname{ord}_NJ$. In these coordinates, $J$ can be written as
\begin{displaymath}
    J(y,z)=p_{y}(z) + O(|z|^{a+1}),
\end{displaymath}
\begin{displaymath}
    J(y',z'):=J(\Psi^{-1}(y',z'))=p_{y'}(z') + O(|z'|^{a+1}).
\end{displaymath}
Then by the fact that $p_{y}(z)$ and $p_{y'}(z')$ are homogeneous polynomials of degree $a$ and~\eqref{eq5},~\eqref{eq6}, for the points on $N$, we can get that
\begin{equation}\label{eq:first-term-transformation}
    p'_{\psi_0(y)}(z')=p_y\left(B(y)^{-1}z'\right).
\end{equation}
Formula~\eqref{eq:first-term-transformation} proves that
\begin{displaymath}
    P_J\in\Gamma^\omega\left(N,\operatorname{Sym}^aW^*\right)
\end{displaymath}
is an intrinsic analytic section. Equivalently, it is the leading class in
\begin{displaymath}
    \mathcal I_N^a/\mathcal I_N^{a+1}\simeq
    \mathcal O_N(\operatorname{Sym}^a W^*),
\end{displaymath}
where the right-hand side denotes the sheaf of analytic sections of $\operatorname{Sym}^a W^*$. This section is not identically zero. However, it can be the zero polynomial at individual base points. Its zero set, understood as the zero set of the section,
\begin{displaymath}
    Z_{P_J}=\{x\in N:P_J(x;\cdot)\equiv 0\},
\end{displaymath}
is invariant under the Hamiltonian flow given that $\{J,H\}=0$. For $a=0$, the leading term is simply $J|_N$.

The same construction applies to a section $s$ of $\mathcal E$: if $b=\operatorname{ord}_Ns$, its leading term is a section of $\operatorname{Sym}^bW^*\otimes\mathcal E|_N$. In particular, for the ambient $n$-form $\Omega$, the second tensor factor is $(\Lambda^nT^*U)|_N$, not $\Lambda^nT^*N$.

\begin{lemma}\label{lem:order-rules}
    If $\operatorname{ord}_NJ=a\geq1$, then
    \begin{equation}\label{eq:derivative-order}
        \operatorname{ord}_N(d_PJ)\geq a-1.
    \end{equation}

    If $J$ is a nonzero analytic function and $s$ is a nonzero analytic section on $U$, then
    \begin{equation}\label{eq:product-order}
        \operatorname{ord}_N(Js)
        =\operatorname{ord}_NJ+\operatorname{ord}_Ns.
    \end{equation}
\end{lemma}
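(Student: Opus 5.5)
The plan is to work locally in the adapted analytic coordinates $(y,z)$ around an arbitrary point $x_0\in N$, where by \eqref{eq1} we have $\mathcal I_{N,x_0}^a=(z^\beta : |\beta|=a)\,\mathcal O_{U,x_0}$. Both claims reduce to statements about the coefficient functions in a local frame. These statements are then valid at every point of $N$, as the definition \eqref{eq:order-definition} requires.

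For \eqref{eq:derivative-order}, I will show the elementary fact that $g\in(z)^a$ implies $\partial g/\partial y_k\in(z)^a$ and $\partial g/\partial z_j\in(z)^{a-1}$. To see this, write $g=\sum_{|\beta|=a}z^\beta g_\beta$ and apply the Leibniz rule. Differentiating in $y_k$ does not touch the monomials $z^\beta$. Differentiating in $z_j$ either hits $g_\beta$, which leaves a factor $z^\beta$, or hits $z^\beta$, which leaves a monomial of degree $a-1$. Next I expand $d_PJ=\sum_k \partial_{y_k}J\,dy_k+\sum_j\partial_{z_j}J\,dz_j$ in the analytic coframe $\{dy_k,dz_j\}$. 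By the fact above, every coefficient lies in $(z)^{a-1}$. Since $x_0$ was arbitrary, $d_PJ$ lies in $\mathcal I_N^{a-1}T^*U$ at every point of $N$, and this gives \eqref{eq:derivative-order}. Frame independence was already noted after \eqref{eq:order-definition}.

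For \eqref{eq:product-order}, put $a=\operatorname{ord}_NJ$ and $b=\operatorname{ord}_Ns$; both are finite by Lemma~\ref{lem:finite-normal-order}. The inequality $\geq$ is immediate, because $\mathcal I^a_{N,x}\cdot\mathcal I^b_{N,x}\mathcal E_x\subset\mathcal I^{a+b}_{N,x}\mathcal E_x$ at each $x\in N$.

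For the reverse inequality I will use the leading terms from \S\ref{subsec:1}: $P_J\in\Gamma^\omega(N,\operatorname{Sym}^aW^*)$ and $P_s\in\Gamma^\omega(N,\operatorname{Sym}^bW^*\otimes\mathcal E|_N)$. In a local frame, the degree $a+b$ part of $Js$ in $z$ is exactly $p_y(z)\,\sigma_y(z)$, where $p_y$ and $\sigma_y$ are the degree $a$ and degree $b$ parts of $J$ and $s$. This holds because all remaining terms of the product have $z$-degree at least $a+b+1$.

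The only delicate point is that the global order is the minimum over base points. So $P_J$ and $P_s$ may vanish at individual points, and I must find a single point where the product of leading terms is nonzero. Here is how. Both sections are analytic and not identically zero on the connected manifold $N$. Their zero sets are therefore proper analytic subsets, and by the first step of the proof of Lemma~\ref{lem:carrier-zero-set} these have empty interior. Their complements are open and dense, so the two complements meet at some point $y_*$.

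At $y_*$, $p_{y_*}$ is a nonzero scalar polynomial and $\sigma_{y_*}$ is a nonzero vector of polynomials. Since $\mathbb R[z]$ is an integral domain, some component $p_{y_*}\sigma_{y_*,j}$ is nonzero. Hence the degree $a+b$ coefficient of $Js$ is nonzero at $y_*$. By analyticity in $y$, the germ of $Js$ at $(y_*,0)$ is then not in $\mathcal I^{a+b+1}\mathcal E$, and so $\operatorname{ord}_N(Js)\le a+b$.

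I expect this last step to be the main obstacle: producing one common point where both leading terms are simultaneously nonzero. The interior-emptiness argument already proved in Lemma~\ref{lem:carrier-zero-set} is what handles it.
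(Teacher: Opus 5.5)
Your proposal is correct and follows the paper's proof essentially step for step. The derivative bound \eqref{eq:derivative-order} comes from the coframe argument (tangential derivatives keep the normal degree, normal derivatives lower it by at most one). For \eqref{eq:product-order}, the argument is the same: identify the degree-$(a+b)$ term of $Js$ as $P_JP_s$, pick a point common to the open dense nonvanishing sets of the two leading sections (the empty-interior step from Lemma~\ref{lem:carrier-zero-set}), and use that $\mathbb R[z]$ has no zero divisors. You only make explicit a few details the paper leaves implicit, namely the Leibniz computation on generators $z^\beta g_\beta$ and the easy inequality $\operatorname{ord}_N(Js)\geq a+b$.
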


\begin{proof}
    In the full coframe $dy_1,\ldots,dy_d,dz_1,\ldots,dz_q$, normal differentiation lowers normal degree by at most one, while tangential differentiation does not lower it. This gives \eqref{eq:derivative-order}.

    Let the two orders in \eqref{eq:product-order} be $a$ and $b$. The product has order at least $a+b$, and its degree-$a+b$ term is $P_J(x;v)P_s(x;v)$. Each of the two leading sections has a proper analytic zero set in the connected manifold $N$. Their nonzero sets are open and dense, so they have a common point $x_0$. At $x_0$, choose a nonzero component of the vector-valued polynomial $P_s(x_0;\cdot)$. The product of that component with the nonzero polynomial $P_J(x_0;\cdot)$ is nonzero, because a real polynomial ring has no zero divisors. Thus, the leading product section is not identically zero, which proves equality.
\end{proof}

\subsubsection{Equivariance of the leading normal terms}

Since for all $t\in \mathbb{R}$, we have $\varphi^{t}N=N$, the derivative $D\varphi^{t}$ preserves $TN$, which induces an invertible quotient map
\begin{displaymath}
    C^t(x):W_x\longrightarrow W_{\varphi^t x},\qquad C^t(x)[v]=[D\varphi^t_xv].
\end{displaymath}

As we have seen before, let $a=\operatorname{ord}_NJ$. In the adapted local coordinates $(y,z)\in\mathbb R^d\times\mathbb R^q$, $J$ has the form
\begin{displaymath}
    J(y,z)=p_{y}(z) + O(|z|^{a+1}),
\end{displaymath}
where $p_{y}$ is the leading homogeneous term of degree $a$. The corresponding intrinsic section is
\begin{displaymath}
    P_{J}(x;z)=p_{y}(z), \quad x=(y,0)\in N.
\end{displaymath}
In these coordinates, the Hamiltonian flow is given by
\begin{displaymath}
    \varphi^{t}(y,z)=(\varphi_{y}^{t}(y,z), \varphi_{z}^{t}(y,z)).
\end{displaymath}
Since $\varphi^{t}N=N$, we know that $\varphi_{z}^{t}(y,0)=0$; thus
\begin{displaymath}
    \varphi_{z}^{t}(y,z)=C^{t}(y)z + O(|z|^{2}).
\end{displaymath}
Furthermore
\begin{displaymath}
    \begin{aligned}
        J(\varphi^{t}(y,z)) & =  J(\varphi_{y}^{t}(y,z), \varphi_{z}^{t}(y,z))  \\
                            & =  J(\varphi_{y}^{t}(y,z), C^{t}(y)z + O(|z|^{2}) )           \\
                            & =   p_{\varphi_{y}^{t}(y,z)}(C^{t}(y)z) + O(|z|^{a+1}),          \\
    \end{aligned}
\end{displaymath}

Let $ h_{t}(y):= \varphi_{y}^{t}(y,0)$. Fix $t\in\mathbb R$ and use adapted charts near $x$ and $\varphi^{t}x$; then the flow can be written as
\begin{displaymath}
    \varphi^{t}(y,z)=\left(h_{t}(y)+O(|z|),\,C^{t}(y)z+O(|z|^{2})\right).
\end{displaymath}
Let $p_{y}$ and $\tilde p_{y'}$ be the leading normal polynomials of $J$ in the source and target charts, respectively. Since their coefficients depend analytically on the base point and they are homogeneous of degree $a$ in the normal variable,
\begin{displaymath}
    J\circ\varphi^{t}(y,z)
    =\tilde p_{h_{t}(y)}\left(C^{t}(y)z\right)+O(|z|^{a+1}).
\end{displaymath}
If $\{J,H\}=0$, the local identity $J\circ\varphi^{t}=J$ gives
\begin{displaymath}
    \tilde p_{h_{t}(y)}\left(C^{t}(y)z\right)=p_{y}(z).
\end{displaymath}
In intrinsic notation, this is
\begin{equation}\label{eq:polynomial-equivariance}
    P_{J}\left(\varphi^{t}x;C^{t}(x)v\right)=P_{J}(x;v).
\end{equation}
There is no scalar multiplier in this identity. Its zero set, understood as the zero set of the section,
\begin{displaymath}
    Z_{P_J}=\{x\in N:P_J(x;\cdot)\equiv 0\},
\end{displaymath}
is invariant under the Hamiltonian flow. Since $P_{J}$ is not identically zero, Lemma~\ref{lem:carrier-zero-set} also gives $P_{J}(x;\cdot)\ne0$ for $\mu$-almost every $x$.

\subsection{Weighted relations and finite elimination}\label{sec:finite-elimination}

We now prove that a finite sequence of polynomial replacements makes the leading normal terms algebraically independent in almost every fiber. The coefficients of each replacement must be real constants. An arbitrary relation chosen separately in each fiber would not define an analytic first integral on $U$.

For the current list of integrals, retain \eqref{eq:current-list-properties} and write
\begin{displaymath}
    a_i=\operatorname{ord}_NA_i\geq1,\qquad P_i=P_{A_i}.
\end{displaymath}
The weights $a_i$ are defined along all of $N$ and are not redefined at exceptional points where a particular leading polynomial vanishes.

\subsubsection{Weighted decomposition in a fixed fiber}

Fix a point $x\in N$. We introduce formal variables $u_1,\ldots,u_n$ and give each $u_i$ a weight $a_i$ ($1\leq i \leq n$).  They are placeholders for the polynomials $P_i(x;\cdot)$, not coordinates of $W_x$.  For a multi-index $\beta\in\mathbb N^n$, write
\begin{displaymath}
    u^\beta=\prod_{i=1}^nu_i^{\beta_i},\qquad|\beta|=\sum_i\beta_i,\qquad|\beta|_a=\sum_i a_i\beta_i.
\end{displaymath}
The ordinary degree $|\beta|$ and the weighted degree $|\beta|_a$ need not agree.

Every polynomial $R(u)=\sum_\beta c_\beta u^\beta$ has a unique finite weighted decomposition
\begin{displaymath}
    R=\sum_{D=0}^{D_{\max}}R_D,
    \qquad R_D(u)=\sum_{|\beta|_a=D}c_\beta u^\beta.
\end{displaymath}
Since each $a_i \geq 1$, we know that $|\beta|_a=0~\Leftrightarrow~\beta=0$; therefore $R_0$ is a constant. Each summand satisfies
\begin{displaymath}
    R_D(t^{a_1}u_1,\ldots,t^{a_n}u_n)=t^DR_D(u).
\end{displaymath}
On the other hand, $P_i(x;tv)=t^{a_i}P_i(x;v)$.  Therefore
\begin{displaymath}
    Q_D(v):=R_D(P_1(x;v),\ldots,P_n(x;v))
\end{displaymath}
is an ordinary homogeneous polynomial of degree $D$ on $W_x$. We note that $Q_D(v)$ is possibly the zero polynomial. This follows term by term:
\begin{displaymath}
    \prod_iP_i(x;tv)^{\beta_i}
    =t^{\sum_i a_i\beta_i}\prod_iP_i(x;v)^{\beta_i}
    =t^D\prod_iP_i(x;v)^{\beta_i}
    \quad\text{if }|\beta|_a=D.
\end{displaymath}

Suppose now that $R\ne0$ is a polynomial relation among the $P_i(x)$. Substituting $tv$ into that relation gives
\begin{displaymath}
    0=R(P_1(x;tv),\ldots,P_n(x;tv))
    =\sum_Dt^DQ_D(v)
    \quad(t\in\mathbb R,\ v\in W_x).
\end{displaymath}
For a fixed $v$, the last expression is a one-variable polynomial in $t$ which vanishes identically. Hence $Q_D(v)=0$ for every $D$.  Since $v$ is arbitrary,
\begin{displaymath}
    R_D(P_1(x;\cdot),\ldots,P_n(x;\cdot))\equiv0
    \quad\text{for every }D.
\end{displaymath}
This says that each component becomes zero after substitution, not that each formal polynomial $R_D$ is zero. Setting $v=0$ shows $R_0=0$. As $R\ne0$, at least one nonzero component $R_D$ has $D\geq1$. Thus, every algebraic dependence has a nonzero relation of positive weighted degree.

For example, $P_1(z)=z^2$ and $P_2(z)=z^3$ have weights $(2,3)$. The relation $u_2^2-u_1^3$ has weighted degree $6$, although its two terms have different ordinary degrees in $u$. The polynomial $(1+u_1)(u_2^2-u_1^3)$ has nonzero weighted components of degrees $6$ and $8$, and both vanish separately after substitution. Ordinary homogeneous decomposition in the variables $u$ would not have this property.

\subsubsection{Relation spaces with a fixed coefficient domain}

For $D\geq1$, define the finite-dimensional real vector space
\begin{displaymath}
    V_D=\operatorname{span}_{\mathbb R}
    \{u^\beta:|\beta|_a=D\}.
\end{displaymath}
It is finite-dimensional because $0\leq\beta_i\leq D/a_i$. If no such monomial exists, $V_D=\{0\}$. Define
\begin{displaymath}
    T_x^{(D)}:V_D\longrightarrow\operatorname{Sym}^DW_x^*,
    \qquad R\longmapsto R(P_1(x),\ldots,P_n(x)),
    \qquad K_D(x)=\ker T_x^{(D)}.
\end{displaymath}
Here,
\begin{displaymath}
    R(P_1(x),\ldots,P_n(x))= R(P_1(x;\cdot),\ldots,P_n(x;\cdot)) \in \operatorname{Sym}^DW_x^*.
\end{displaymath}
The domain $V_D$ is the same coefficient space at every base point.

Let $B$ be the set of $x$ for which $P_1(x),\ldots,P_n(x)$ are algebraically dependent.  The preceding decomposition proves both inclusions in
\begin{equation}\label{eq:dependence-countable-union}
    B=\bigcup_{D\geq1}B_D,
    \qquad B_D=\{x\in N:\dim K_D(x)>0\}.
\end{equation}
In fact,
\begin{itemize}
    \item \textbf{$B\subseteq\bigcup_{D\ge 1}B_D$}: If $x\in B$, then there exists a nonzero real-coefficient polynomial $R$ such that
        \begin{displaymath}
            R(P_1(x),\dots,P_n(x))=0.
        \end{displaymath}
        Decompose $R$ by weighted degree as
        \begin{displaymath}
            R=\sum_{D}R_D.
        \end{displaymath}
        By the argument in the preceding subsection, each component $R_D$ also vanishes after substitution. Since $R\neq 0$, at least one $R_D\neq 0$; moreover, since $P_i(x;0)=0$, setting $v=0$ gives $R_0=0$, so this nonzero component satisfies $D\ge 1$. Hence
        \begin{displaymath}
            0\neq R_D\in K_D(x),
        \end{displaymath}
        i.e. $\dim K_D(x)>0$, which means $x\in B_D$.

    \item \textbf{$\bigcup_{D\ge 1}B_D\subseteq B$}: If $x\in B_D$, then $K_D(x)$ contains a nonzero $R$ satisfying
        \begin{displaymath}
            R(P_1(x),\dots,P_n(x))=0.
        \end{displaymath}
        This is already a nonzero polynomial relation among $P_1(x),\dots,P_n(x)$, so $x\in B$.
\end{itemize}
Moreover, formula \eqref{eq:dependence-countable-union} replaces an uncountable collection of possible real coefficients by countably many finite-dimensional kernels. It does not replace real coefficients by rational ones, and it also does not require a uniform bound on relation degrees.

Finally, we have the following equality
\begin{equation}\label{eq:strict-kernel-invariance}
    K_D(\varphi^t x)=K_D(x).
\end{equation}

Indeed, by \eqref{eq:polynomial-equivariance}, for every fixed $R\in V_D$,
\begin{displaymath}
    R(P_1(\varphi^t x),\ldots,P_n(\varphi^t x))\circ C^t(x)
    =
    R(P_1(x),\ldots,P_n(x)).
\end{displaymath}
Here, the composition denotes substitution in the fiber polynomial.

Let $R\in K_D(x)$. Then
\begin{displaymath}
    R(P_1(x),\ldots,P_n(x))=0.
\end{displaymath}
Hence, by the identity above,
\begin{displaymath}
    R(P_1(\varphi^t x),\ldots,P_n(\varphi^t x))\circ C^t(x)=0.
\end{displaymath}
Since $C^t(x)$ is invertible,
\begin{displaymath}
    R(P_1(\varphi^t x),\ldots,P_n(\varphi^t x))=0,
\end{displaymath}
so $R\in K_D(\varphi^t x)$. Thus
\begin{displaymath}
    K_D(x)\subseteq K_D(\varphi^t x).
\end{displaymath}
Applying the same argument with $t$ replaced by $-t$ gives the reverse inclusion
\begin{displaymath}
    K_D(\varphi^t x)\subseteq K_D(x).
\end{displaymath}
Therefore
\begin{displaymath}
    K_D(\varphi^t x)=K_D(x)
\end{displaymath}
as subspaces of the fixed space $V_D$.

\subsubsection{Measurability and a constant relation}
Fix a Euclidean inner product on $V_D$, and choose a continuous fiber metric on $\operatorname{Sym}^D W^*$. In local analytic frames, the matrix entries of $T_x^{(D)}$ are analytic. The sets
\begin{displaymath}
    B_D=\{x\in N:\dim K_D(x)>0\}
\end{displaymath}
are therefore measurable, as can be seen from the vanishing of appropriate minors.

To encode the whole kernel $K_D(x)$ as a measurable matrix-valued function, and to handle rank jumps, we use  the orthogonal projection of $V_{D}$ onto $K_{D}(x)$,
\begin{displaymath}
    \mathsf P_D(x):V_D\longrightarrow V_D.
\end{displaymath}
Since $K_D(x)=\operatorname{im}\mathsf P_D(x)$, the assertion that $K_D(x)$ is almost everywhere constant is equivalent to the assertion that the projection matrix $\mathsf P_D(x)$ is almost everywhere constant. The latter can be treated on an entry-by-entry basis.

The projection admits the regularized limit formula
\begin{equation}\label{eq:projection-formula}
    \mathsf P_D(x)
    =\lim_{r\to\infty}\frac1r
    \left((T_x^{(D)})^*T_x^{(D)}+\frac1rI\right)^{-1}.
\end{equation}
To verify the formula, first note that $\ker T_x^{(D)}=K_D(x)$.

First note that
\begin{displaymath}
    \ker\big((T_x^{(D)})^*T_x^{(D)}\big)
    =\ker T_x^{(D)}
    =K_D(x).
\end{displaymath}
Thus, it suffices to show that the right-hand side of \eqref{eq:projection-formula} fixes every vector in $K_D(x)$ and annihilates every vector in its orthogonal complement.

To prove this, first diagonalize $(T_x^{(D)})^*T_x^{(D)}$.  Let $\tau\ge0$ be an eigenvalue and let $e_\tau$ be a corresponding eigenvector. Then
\begin{displaymath}
    \left((T_x^{(D)})^*T_x^{(D)}+\frac1rI\right)e_\tau
    =\left(\tau+\frac1r\right)e_\tau,
\end{displaymath}
hence
\begin{displaymath}
    \frac1r
    \left((T_x^{(D)})^*T_x^{(D)}+\frac1rI\right)^{-1}
    e_\tau
    =\frac1{r\tau+1}e_\tau.
\end{displaymath}
If $\tau=0$, so that $e_\tau\in K_D(x)$, then $\frac1{r\tau+1}=1$, and the vector is left unchanged in the limit. If $\tau>0$, so that $e_\tau\notin K_D(x)$, then $\frac1{r\tau+1}\to0$, and the vector is annihilated in the limit. Therefore, the limit in \eqref{eq:projection-formula} is the identity on $K_D(x)$ and zero on its orthogonal complement, i.e.\ it is precisely the orthogonal projection onto $K_D(x)$. This also shows that the limit exists and is independent of the chosen frame.

For measurability, note that each finite-$r$ expression
\begin{displaymath}
    \frac1r
    \left((T_x^{(D)})^*T_x^{(D)}+\frac1rI\right)^{-1}
\end{displaymath}
is continuous in local analytic frames: the matrix entries of $T_x^{(D)}$ are analytic and hence continuous, and taking adjoints, products, adding $\frac1rI$, inverting, and multiplying by $\frac1r$ are all continuous operations. Taking the pointwise limit as $r\to\infty$ preserves measurability. Hence $\mathsf P_D(x)$ is a measurable matrix-valued function on $N$. Consequently
\begin{displaymath}
    \dim K_D(x)=\operatorname{tr}\mathsf P_D(x)
\end{displaymath}
is measurable as well, and so $B_D$ is measurable.

Finally, by \eqref{eq:strict-kernel-invariance}, we obtain
\begin{displaymath}
    \mathsf P_D(\varphi^t x)=\mathsf P_D(x),
\end{displaymath}
So $\mathsf {P} _ {D} $ is a flow-invariant matrix-valued function.

Every matrix entry of this projection, written in a fixed basis of $V_D$, is an invariant measurable real function. Ergodicity ensures that all these finitely many entries are constant almost everywhere. Hence, there is a fixed subspace $K_D^0\subset V_D$ such that
\begin{displaymath}
    K_D(x)=K_D^0\quad\text{for }\mu\text{-almost every }x.
\end{displaymath}
There are only countably many $D$, so these statements may be required simultaneously on a common full-measure set.

Suppose $\mu(B)>0$.  By \eqref{eq:dependence-countable-union}, some $B_D$ has positive measure.  The set $B_D$ is invariant, so it has full measure. It follows that $K_D^0\ne\{0\}$.  Choose one fixed $0\ne R\in K_D^0$. The analytic section
\begin{displaymath}
    x\longmapsto R(P_1(x),\ldots,P_n(x))
    \in\operatorname{Sym}^DW_x^*
\end{displaymath}
vanishes almost everywhere. Its zero set is invariant under the flow by \eqref{eq:polynomial-equivariance}. Applying Lemma~\ref{lem:carrier-zero-set}, we obtain
\begin{equation}\label{eq:global-polynomial-relation}
    R(P_1,\ldots,P_n)\equiv0\quad\text{on }N.
\end{equation}
The usual identity theorem alone would not justify this step, since $\mu$ can be singular. It is the minimal-carrier zero-set principle that extends the relation from almost every base point to every base point of $N$.

\subsubsection{A finite decreasing quantity}

For the current list, put
\begin{displaymath}
    \delta=\operatorname{ord}_N\Omega-\sum_{i=1}^na_i+n=w-\sum_{i=1}^na_i+n.
\end{displaymath}
All terms are finite. By \eqref{eq:derivative-order}, each coefficient of $d_PA_i$ has normal order at least $a_i-1$. Expanding the full exterior product gives
\begin{displaymath}
    w\geq\sum_{i=1}^n(a_i-1),
    \qquad \delta\in\mathbb N.
\end{displaymath}
It is essential here that $\Omega$ is the complete ambient form. For example, with $A_1=z$, $A_2=yz$, and $N=\{z=0\}$, one has
\begin{displaymath}
    dA_1\wedge dA_2=z\,dz\wedge dy\not\equiv0.
\end{displaymath}
Its pullback to $N$ is zero, and there is no nonzero all-normal two-form component. Neither of those substitutes would supply a finite termination quantity.

\begin{proposition}\label{prop:finite-elimination}
    Starting with \eqref{eq:current-list-properties}, one can make finitely many constant-coefficient polynomial replacements of the $A_i$ so that these properties continue to hold and the resulting leading normal polynomials $P_1(x),\ldots, P_n(x)$ are algebraically independent on $W_x$ for $\mu$-almost every $x$.
\end{proposition}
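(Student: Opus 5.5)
The plan is a descent on the integer $\delta=w-\sum_i a_i+n$. At each stage I take a constant-coefficient relation among the leading polynomials and use it to replace one integral. I then show that \eqref{eq:current-list-properties} survives the replacement and that $\delta$ drops by at least one. Since $\delta\in\mathbb N$, the procedure stops after finitely many steps. It can only stop when $\mu(B)=0$, which is exactly the conclusion of the proposition.

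\emph{One step.} Suppose $\mu(B)>0$ for the current list. By the preceding subsection, the set $\{D\geq1: K_D^0\neq\{0\}\}$ is nonempty. Let $D$ be its least element and fix $0\neq R\in K_D^0$. By \eqref{eq:global-polynomial-relation}, $R(P_1,\ldots,P_n)\equiv0$ on all of $N$. Because $R\in V_D$ is weighted homogeneous of positive weighted degree, it is not constant, so some index $k$ has $\partial_kR:=\partial R/\partial u_k\neq0$ as a formal polynomial. This polynomial $\partial_kR$ lies in $V_{D-a_k}$. I claim that $\partial_kR(P_1,\ldots,P_n)\not\equiv0$ on $N$, and I argue it in two cases.
\begin{itemize}
\item If $D=a_k$, then $\partial_kR$ is a nonzero constant, so the claim is immediate.
\item If $D>a_k$ and the section $\partial_kR(P)$ vanished identically, then $0\neq\partial_kR\in K_{D-a_k}(x)$ for every $x$. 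Hence $K^0_{D-a_k}\neq\{0\}$ with $1\leq D-a_k<D$, which contradicts the minimality of $D$.
\end{itemize}

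\emph{The replacement.} I replace $A_k$ by $A_k':=R(A_1,\ldots,A_n)$ and keep the other $A_i$. The structural properties are checked as follows.
\begin{itemize}
\item Poisson commutation with the remaining $A_i$ and with $H$ holds by the Leibniz rule.
\item Since $R$ has no constant term, the old leading terms give $\operatorname{ord}_N A_k'\geq D$.
\item The degree-$D$ term of $A_k'$ is $R(P_1,\ldots,P_n)\equiv0$, so in fact $a_k':=\operatorname{ord}_N A_k'\geq D+1$.
\item In particular $A_k'|_N=0$.
\end{itemize}
For the form, the chain rule gives
\[
  \Omega'=dA_1\wedge\cdots\wedge dA_k'\wedge\cdots\wedge dA_n=(\partial_kR)(A)\,\Omega ,
\]
because every other term contains a repeated $dA_i$. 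The function $(\partial_kR)(A)$ is weighted homogeneous of degree $D-a_k$ in the $A_i$. Its normal order is therefore at least $D-a_k$, and its degree-$(D-a_k)$ term is $\partial_kR(P)\not\equiv0$. Hence its order is exactly $D-a_k$ and it is not identically zero. Lemma~\ref{lem:order-rules}, via \eqref{eq:product-order}, then gives $w'=w+D-a_k$. By analyticity on the connected set $U$, $\Omega'\not\equiv0$. This also shows that $A_k'\not\equiv0$, so $a_k'$ is finite by Lemma~\ref{lem:finite-normal-order}.

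\emph{The descent and where the difficulty lies.} The computation is
\[
  \delta'=w+D-a_k-\Bigl(\sum_i a_i-a_k+a_k'\Bigr)+n=\delta+D-a_k'\leq\delta-1 .
\]
All hypotheses of the previous subsections are restored for the new list: finite weights, \eqref{eq:current-list-properties}, and the equivariance \eqref{eq:polynomial-equivariance}. So the construction of $K_D^0$ can be repeated, with the weights recomputed at each stage. Since $\delta\geq0$ always, the process terminates, and at termination $\mu(B)=0$.

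I expect the main obstacle to be guaranteeing that $\partial_kR(P)$ is nonzero as a section on $N$, and not merely somewhere in measure. This is why I take the minimal $D$ among the almost-everywhere constant kernels $K^0_D$, rather than minimizing the relation at one fiber. It is also why I use Lemma~\ref{lem:carrier-zero-set} to pass from an almost-everywhere relation to an identity on all of $N$.

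A secondary point is whether $A_1=H-e$ can be preserved. The proposition only asks for \eqref{eq:current-list-properties}, which does not require this, so I would not insist on it.
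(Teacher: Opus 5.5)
Your proposal is correct and is essentially the paper's proof: a descent on $\delta=w-\sum_i a_i+n$, obtained by replacing $A_j$ with $R(A_1,\ldots,A_n)$ for a global weighted homogeneous relation $R$ of minimal positive weighted degree, and using $\Omega'=(\partial_jR)(A)\,\Omega$ to get $w'=w+D-a_j$ and $\delta'=\delta+D-a_j'<\delta$. Your minimization over $\{D\geq1:K_D^0\neq\{0\}\}$ coincides with the paper's minimization over relations vanishing on all of $N$, by Lemma~\ref{lem:carrier-zero-set}; one small correction is that the bound $\operatorname{ord}_NA_k'\geq D$ comes from the weighted homogeneity of $R$ (each monomial $A^\beta$ has order at least $\sum_i a_i\beta_i=D$), not merely from $R$ having no constant term.
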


\begin{proof}
    If $\mu(B)=0$, there is nothing to do. Otherwise, \eqref{eq:global-polynomial-relation} supplies a nonzero global weighted homogeneous relation. Among all such relations, choose one of the smallest positive weighted degree $D$, and denote it by $R$.

    Since the coefficient field has characteristic zero and $D>0$, there is an index $j$ such that $Q_0:=\partial_jR\ne0$.  Indeed, if all partial derivatives of a real polynomial were zero, every nonconstant monomial would have zero coefficient: differentiating a monomial with $\beta_j>0$ gives the nonzero coefficient $\beta_jc_\beta$, and distinct monomials cannot cancel after differentiating with respect to the same variable. The resulting constant polynomial would be incompatible with a positive weighted degree.

    The polynomial $Q_0$ is weighted homogeneous of degree $D-a_j\geq0$. If this degree is zero, $Q_0$ is a nonzero constant. If it is positive and $Q_0(P_1,\ldots, P_n)$ were identically zero on $N$, it would be a nonzero global relation of smaller degree than $D$. Thus, in either case
    \begin{equation}\label{eq:derivative-relation-nonzero}
        Q_0(P_1,\ldots,P_n)\not\equiv0\quad\text{on }N.
    \end{equation}
    Set
    \begin{displaymath}
        Q=Q_0(A_1,\ldots,A_n),\qquad
        A'_j=R(A_1,\ldots,A_n),\qquad A'_i=A_i\ (i\ne j).
    \end{displaymath}
    Weighted expansion and \eqref{eq:derivative-relation-nonzero} show
    \begin{equation}\label{eq:multiplier-order}
        Q\not\equiv0,
        \qquad \operatorname{ord}_NQ=D-a_j.
    \end{equation}
    For completeness, every monomial of $Q_0$ has weighted degree $D-a_j$. Substituting the leading terms gives its degree-$(D-a_j)$ normal term, namely $Q_0(P)$, while using at least one higher-order remainder raises the normal degree. The displayed nonvanishing gives equality of orders.

    We next estimate the order of $A'_j$ without assuming that its first possible remainder coefficient is nonzero. In adapted coordinates, write $A_i=P_i+\varepsilon_i$, where $\varepsilon_i=O(|z|^{a_i+1})$.  For a monomial $A^\beta$ of $R(A)$, $\sum_i a_i\beta_i=D$.  If $b_i$ of the $\beta_i$ factors of type $i$ are chosen from $\varepsilon_i$, its normal order is at least
    \begin{displaymath}
        \sum_i\left((\beta_i-b_i)a_i+b_i(a_i+1)\right)
        =D+\sum_i b_i.
    \end{displaymath}
    The sum of terms with every $b_i=0$ is $R(P)$, which vanishes identically. Every other term has order at least $D+1$. Hence
    \begin{equation}\label{eq:new-order-lower-bound}
        \operatorname{ord}_NA'_j\geq D+1,
    \end{equation}
    where at this stage the left side could a priori be infinite.

    To rule out that possibility, use the exact ambient chain rule:
    \begin{displaymath}
        d_PA'_j=\sum_{k=1}^n(\partial_kR)(A)\,d_PA_k.
    \end{displaymath}
    In the exterior product with the other $n-1$ differentials in their original positions, all terms except $k=j$ vanish because a factor is repeated. The surviving term is in the original position, so there is no extra sign. Therefore
    \begin{equation}\label{eq:exact-wedge-identity}
        \Omega':=d_PA'_1\wedge\cdots\wedge d_PA'_n=Q\Omega.
    \end{equation}
    Both factors on the right are nonzero analytic objects on the connected set $U$. Their nonzero sets are open and dense, so $Q\Omega\not\equiv0$. In particular, $A'_j\not\equiv0$ and the new list is generically independent in $U$. Its order $a'_j$ is finite by Lemma~\ref{lem:finite-normal-order}, and \eqref{eq:new-order-lower-bound} now gives $a'_j>D$.

    By \eqref{eq:product-order}, \eqref{eq:multiplier-order}, and \eqref{eq:exact-wedge-identity},
    \begin{displaymath}
        w':=\operatorname{ord}_N\Omega'=w+D-a_j.
    \end{displaymath}
    Since $a'_i=a_i$ for $i\ne j$, the new decreasing quantity satisfies
    \begin{displaymath}
        \begin{aligned}
            \delta'
 &=w'-\left(\sum_{i\ne j}a_i+a'_j\right)+n \notag\\
 &=w+D-a_j-\left(\sum_i a_i-a_j+a'_j\right)+n \notag\\
 &=\delta+D-a'_j<\delta.
        \end{aligned}
    \end{displaymath}
    All entries of the new list remain analytic, vanish on $N$, and are constant-coefficient polynomials in the old entries. The Leibniz rule for the Poisson bracket therefore preserves both their mutual commutativity and their commutativity with $H$. The new exterior product is nonzero by \eqref{eq:exact-wedge-identity}, and its new quantity $\delta'$ is again a nonnegative integer.

    Whenever the dependent-fiber set has positive measure, the construction strictly decreases this nonnegative integer. There can be at most the initial value of $\delta$ such replacements. The process must therefore reach a list for which the dependent-fiber set has measure zero. This is a finite analytic construction, with no infinite formal substitution or convergence issue.
\end{proof}

From now on, $A_i,a_i, P_i$ denote the final list and its orders and leading terms. In particular,
\begin{equation}\label{eq:fiber-algebraic-independence}
    P_1(x),\ldots,P_n(x)~\text{are algebraically independent on}~W_x\quad\text{for}~\mu\text{-almost every}~x.
\end{equation}

\subsubsection{From algebraic independence to fiberwise differentials}

\begin{lemma}\label{lem:jacobian-criterion}
    Assume that $P_1,\ldots,P_n\in\mathbb R[z_1,\ldots,z_q]$ are algebraically independent. Then
    \begin{displaymath}
        \{v\in\mathbb R^q:dP_1(v),\ldots,dP_n(v)~\text{are linearly independent}\}
    \end{displaymath}
    is a nonempty open dense set.
\end{lemma}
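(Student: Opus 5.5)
The plan is to prove the classical Jacobian criterion in characteristic zero. First, the set in question is the complement of the common zero locus of all $n\times n$ minors of the $n\times q$ Jacobian matrix $\bigl(\partial P_i/\partial z_k\bigr)$. These minors are real polynomials on $\mathbb R^q$, so the set is open. Moreover, a nonzero real polynomial cannot vanish on a nonempty open subset of $\mathbb R^q$. Hence, as soon as one minor is not the zero polynomial, the set is also dense (and nonempty). So everything reduces to showing that the Jacobian matrix has rank $n$ over the field $\mathbb R(z_1,\ldots,z_q)$. Equivalently, the differentials $dP_1,\ldots,dP_n$ must be linearly independent in the module of Kähler differentials $\Omega_{\mathbb R(z)/\mathbb R}$, identified with $\mathbb R(z)^q$ via the basis $dz_1,\ldots,dz_q$.

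To prove this rank statement, I would argue by contradiction in the following order. Set $K=\mathbb R(P_1,\ldots,P_n)\subset L=\mathbb R(z)$. By hypothesis $K$ is purely transcendental of degree $n$ over $\mathbb R$. Extend $P_1,\ldots,P_n$ by some of the coordinates $z_k$ to a transcendence basis $P_1,\ldots,P_n,z_{k_1},\ldots,z_{k_{q-n}}$ of $L/\mathbb R$; this is possible by the exchange lemma, since the $z_k$ generate $L$.

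Then every $z_k$ is algebraic over $M=\mathbb R(P,z_{k_\bullet})$. Since we are in characteristic zero, this algebraic extension is separable. Differentiating the minimal polynomial of $z_k$ over $M$ expresses $dz_k$ as an $L$-linear combination of $dP_1,\ldots,dP_n,dz_{k_1},\ldots,dz_{k_{q-n}}$. Here one uses that the derivative of the minimal polynomial, evaluated at $z_k$, is nonzero by separability. Consequently these $q$ differentials span the $q$-dimensional space $\Omega_{L/\mathbb R}=L^q$, so they are linearly independent. In particular $dP_1,\ldots,dP_n$ are linearly independent over $L$, and some $n\times n$ minor of the Jacobian matrix is a nonzero rational function (indeed a nonzero polynomial).

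For a more elementary alternative, one can suppose a relation $\sum_i c_i(z)\,dP_i=0$ and use a minimal-degree annihilating polynomial as in the proof of Proposition~\ref{prop:finite-elimination}. I would avoid that route and keep the transcendence-basis argument, which is standard.

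The main obstacle, modest here, is the separability step: justifying that $dz_k$ lies in the span of the chosen differentials. Once the rank over $L$ is established, nonemptiness, openness and density on $\mathbb R^q$ follow immediately from the polynomial nature of the minors, as described in the first paragraph.
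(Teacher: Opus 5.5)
Your proof is correct and follows essentially the same route as the paper. You extend $P_1,\ldots,P_n$ to a transcendence basis of $\mathbb R(z_1,\ldots,z_q)$, use characteristic-zero separability (the derivative of a minimal polynomial does not vanish at its root) to show that the Jacobian has rank $n$ over $\mathbb R(z)$, and then use a nonzero $n\times n$ minor to get openness, density and nonemptiness. The paper does the same, except that it works with derivations rather than differentials: it extends derivations $D_j$ with $D_jP_i=\delta_{ij}$ and obtains a matrix $B$ with $JB=I_n$, whereas you show that $dP_1,\ldots,dP_n,dz_{k_1},\ldots,dz_{k_{q-n}}$ span $\mathbb R(z)^q$.
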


\begin{proof}
    Let $K=\mathbb R(z_1,\ldots,z_q)$. Extend $P_1,\ldots,P_n$ to a transcendence basis $P_1,\ldots,P_n,T_{n+1},\ldots,T_q$ of $K/\mathbb R$, and put
    \begin{displaymath}
        K_0=\mathbb R(P_1,\ldots,P_n,T_{n+1},\ldots,T_q).
    \end{displaymath}

    The extension $K/K_0$ is algebraic and is generated by the finitely many elements $z_1,\ldots,z_q$, so it is finite. It is separable because the characteristic is zero.

    For each $1\leq j\leq n$, algebraic independence allows us to define an $\mathbb R$-derivation $D_j$ on $K_0$ by
    \begin{displaymath}
        D_jP_i=\delta_{ij}\quad (1\leq i\leq n), \qquad D_jT_k=0\quad (n+1\leq k\leq q).
    \end{displaymath}
    We extend these derivations to $K$ by successively adjoining $z_1,\ldots,z_q$.

    To justify each extension, let $D$ be an $\mathbb R$-derivation on an intermediate field $L$, and let $\alpha$ be algebraic over $L$, with minimal polynomial $M(t)\in L[t]$. For $F(t)\in L[t]$, write $DF$ for the polynomial obtained by applying $D$ to its coefficients, and write $F'$ for its formal derivative with respect to $t$. Since the characteristic is zero, $M'(\alpha)\ne0$. Set
    \begin{displaymath}
        b=-\frac{(DM)(\alpha)}{M'(\alpha)}
    \end{displaymath}
    and define
    \begin{displaymath}
        \widetilde D(F(\alpha))=(DF)(\alpha)+F'(\alpha)b.
    \end{displaymath}
    If $F$ is replaced by $F+MG$, the right-hand side changes by
    \begin{displaymath}
        G(\alpha)\bigl((DM)(\alpha)+M'(\alpha)b\bigr)=0.
    \end{displaymath}
    Thus the definition is independent of the polynomial representative. It is additive, satisfies the Leibniz rule, and agrees with $D$ on $L$. Since $L[\alpha]=L(\alpha)$, it defines a derivation on $L(\alpha)$. Its value on $\alpha$ is forced by differentiating $M(\alpha)=0$, so the extension is unique. Applying this construction successively gives derivations $D_j:K\to K$ with the prescribed values on $P_1,\ldots,P_n$.

    Let $J$ be the $n\times q$ Jacobian matrix
    \begin{displaymath}
        J=\left(\frac{\partial P_i}{\partial z_k}\right),
    \end{displaymath}
    and let $B$ be the $q\times n$ matrix over $K$ defined by
    \begin{displaymath}
        B_{kj}=D_jz_k.
    \end{displaymath}
    Since each $D_j$ annihilates $\mathbb R$, the Leibniz rule applied to the polynomials $P_i$ gives
    \begin{displaymath}
        (JB)_{ij}=\sum_{k=1}^q\frac{\partial P_i}{\partial z_k}\,D_jz_k=D_jP_i=\delta_{ij}.
    \end{displaymath}
    Hence $JB=I_n$, and $J$ has rank $n$ over $K$.

    Consequently, some $n\times n$ minor $\Delta$ of $J$ is a nonzero real polynomial. Its nonzero set
    \begin{displaymath}
        \{v\in\mathbb R^q:\Delta(v)\ne0\}
    \end{displaymath}
    is nonempty, open, and dense. The set on which $J(v)$ has rank $n$ is open, since it is the union of the nonzero sets of its $n\times n$ minors, and it contains the above dense set. Finally, $J(v)$ has rank $n$ exactly when $dP_1(v),\ldots,dP_n(v)$ are linearly independent. This proves the assertion.
\end{proof}

For every $x$ satisfying \eqref{eq:fiber-algebraic-independence}, define
\begin{equation}\label{eq:fiber-regular-set}
    \mathcal O_x=\{v\in W_x: d_vP_1(x;v),\ldots,d_vP_n(x;v)~\text{are linearly independent}\}.
\end{equation}
It is a nonempty open dense subset of $W_x$. The notation $d_v$ always means differentiation in a fixed vector-space fiber. It is different from the ambient differential $d_PA_i$ and from differentiation in the base point $x$.

\subsection{The Poisson structure on the full normal bundle}\label{sec:normal-poisson}

\subsubsection{A constant Poisson structure in each fiber}

Fix $x\in N$, and put $E_x=T_xP$, $T_x=T_xN$.  Let $q_x: E_x\to W_x=E_x/T_x$ be the quotient map. Define
\begin{displaymath}
    \omega_x^\flat:E_x\longrightarrow E_x^*,
    \qquad \omega_x^\flat(u)=\omega_x(u,\cdot),
    \qquad \omega_x^\sharp=(\omega_x^\flat)^{-1}.
\end{displaymath}
The ambient Poisson bilinear form in our convention is
\begin{displaymath}
    J_x(\alpha,\beta)=
    \omega_x(\omega_x^\sharp\alpha,\omega_x^\sharp\beta).
\end{displaymath}
Its projection to the quotient is
\begin{displaymath}
    \Pi_x(\alpha,\beta)
    =J_x(q_x^*\alpha,q_x^*\beta),
    \qquad \alpha,\beta\in W_x^*.
\end{displaymath}
On the vector space $W_x$, this is a constant bivector, and hence defines a Poisson bracket on polynomial functions. The word ``constant'' refers to the fiber variable. The coefficients can and generally do depend analytically on the base point $x$.

To fix the sign in formulas involving the sharp map, we define $\Pi_x^\sharp: W_x^*\to W_x$ by
\begin{displaymath}
    \beta(\Pi_x^\sharp\alpha)=\Pi_x(\beta,\alpha).
\end{displaymath}
With this explicit convention,
\begin{equation}\label{eq:poisson-sharp}
    \Pi_x^\sharp=q_x\omega_x^\sharp q_x^*.
\end{equation}
Indeed, the value of $\beta$ on the right is $\omega_x(\omega_x^\sharp q_x^*\beta, \omega_x^\sharp q_x^*\alpha)=\Pi_x(\beta,\alpha)$. All statements about the radical and isotropic subspaces are, of course, independent of reversing both sharp conventions.

Since the Hamiltonian flow preserves $\omega$ and $TN$, it preserves this quotient construction. For $\alpha,\beta\in W_{\varphi^t x}^*$,
\begin{displaymath}
    \Pi_{\varphi^t x}(\alpha,\beta)
    =\Pi_x\left(C^t(x)^*\alpha,C^t(x)^*\beta\right).
\end{displaymath}

\subsubsection{The leading normal polynomials commute}

In adapted coordinates $(y,z)\in\mathbb R^d\times\mathbb R^q$, we have $N=\{z=0\}$, and each final integral $A_i$ satisfies $A_i|_N\equiv0$ with normal order $a_i\ge1$.  Hence
\begin{displaymath}
    A_i(y,z)=P_i(y;z)+O(|z|^{a_i+1}),
\end{displaymath}
where $P_i(y;z)$ is the leading homogeneous term of degree $a_i$ in $z$.  Differentiating this expansion in the tangential and normal directions gives
\begin{displaymath}
    \partial_zA_i=\partial_zP_i+O(|z|^{a_i}),
    \qquad
    \partial_yA_i=O(|z|^{a_i}).
\end{displaymath}
Thus, the normal derivative retains the leading term $\partial_zP_i$, which is a homogeneous polynomial of degree $a_i-1$ in $z$. In contrast, the tangential derivative does not lower the normal order and remains of order at least $a_i$.

Substituting these derivatives into the ambient Poisson bracket and decomposing the ambient Poisson matrix into $yy$, $yz$, $zy$, and $zz$ blocks, we examine which block can contribute the lowest normal degree. For the $yy$ block, the two factors are $\partial_yA_i=O(|z|^{a_i})$ and $\partial_yA_j=O(|z|^{a_j})$, so this block has normal order at least $a_i+a_j$.  For the $yz$ and $zy$ blocks, one factor is $\partial_yA_i=O(|z|^{a_i})$, while the other is $\partial_zA_j=\partial_zP_j+O(|z|^{a_j})$, whose leading term $\partial_zP_j$ is homogeneous of degree $a_j-1$ in $z$. Hence, these blocks have normal order at least $a_i+a_j-1$. Only the $zz$ block can reach the lower degree: its two factors are $\partial_zP_i$ and $\partial_zP_j$, of degrees $a_i-1$ and $a_j-1$ in $z$, respectively, so the lowest normal degree is
\begin{displaymath}
    (a_i-1)+(a_j-1)=a_i+a_j-2.
\end{displaymath}

Moreover, the $zz$ block of the ambient Poisson matrix has the expansion
\begin{displaymath}
    J_{zz}(y,z)=J_{zz}(y,0)+O(|z|).
\end{displaymath}
Replacing $J_{zz}(y,z)$ by $J_{zz}(y,0)$, or replacing $\partial_zA_i$ and $\partial_zA_j$ by their leading terms $d_zP_i$ and $d_zP_j$, introduces errors of normal degree at least one higher.  Therefore the complete degree-$(a_i+a_j-2)$ part of $\{A_i,A_j\}$ is exactly
\begin{displaymath}
    J_{zz}(y,0)\left(d_zP_i(y;z),\,d_zP_j(y;z)\right).
\end{displaymath}
All other blocks and remainder terms have higher normal degree, so this identification is exact and not merely an approximation to leading order.

We now relate $J_{zz}(y,0)$ to the fiberwise Poisson bivector $\Pi_x$.  In the preceding section, $\Pi_x$ is defined by
\begin{displaymath}
    \Pi_x(\alpha,\beta)=J_x(q_x^*\alpha,q_x^*\beta),
    \qquad \alpha,\beta\in W_x^*,
\end{displaymath}
where $J_x$ is the ambient Poisson bilinear form and $q_x:E_x\to W_x=E_x/T_x$ is the quotient map.  In adapted coordinates, $W_x^*\cong\operatorname{Ann}(T_xN)$, and $J_{zz}(y,0)$ is precisely the matrix representation of $\Pi_x$ in this trivialization.  Hence the degree-$(a_i+a_j-2)$ part above can be written as
\begin{displaymath}
    \Pi_x\left(d_{v}P_i(x;v),\,d_{v}P_j(x;v)\right),
\end{displaymath}
where $d_{v}P_i(x;v)\in W_x^*$ is the differential of $P_i(x;\cdot)$ at $v\in W_x$.

We now use the commutation assumption $\{A_i, A_j\}=0$. Since the Poisson bracket vanishes identically, each component of its normal expansion vanishes. Therefore, every normal-degree component on both sides must vanish. In particular, its degree-$(a_i+a_j-2)$ part must vanish, which yields
\begin{equation}\label{eq:leading-poisson-commutation}
    \Pi_x\left(d_vP_i(x;v),d_vP_j(x;v)\right)=0\qquad(x\in N,\ v\in W_x).
\end{equation}
Equation~\eqref{eq:leading-poisson-commutation} says that the fiber differentials $d_vP_i(x;v)$ span an isotropic subspace of $(W_x^*,\Pi_x)$ under the fiberwise Poisson bivector $\Pi_x$: for every base point $x$ and every normal vector $v$, the differentials $d_{v}P_i(x;v)$ and $d_{v}P_j(x;v)$ are pairwise orthogonal with respect to $\Pi_x$.

The entire comparison takes place in the ambient neighborhood $U$. On the regular energy hypersurface $\Sigma$, the restricted form is degenerate, with
\begin{displaymath}
    \ker(\omega_{x}|_{T_{x}\Sigma})=\mathbb{R}X_{H}(x).
\end{displaymath}
Moreover, if one first restricts the original integrals to $\Sigma$ and then uses the normal bundle $T\Sigma|_{N}/TN$, the original energy integral $H-e$ is identically zero and has infinite normal order. The ambient construction uses the nondegenerate form $\omega$ on $TP|_{N}$ and retains the finite normal order of $H-e$ at the start of the elimination argument.

\subsubsection{The radical and its dimension}

Write $\operatorname{rank}(\omega_x|_{T_x})=2s$.  The annihilator identification and the symplectic orthogonal are
\begin{displaymath}
    q_x^*:W_x^*\xrightarrow{\ \simeq\ }\operatorname{Ann}(T_x),
    \qquad
    \omega_x^\sharp\operatorname{Ann}(T_x)=T_x^\omega.
\end{displaymath}
Under their composition, $\Pi_x$ is identified with $\omega_x|_{T_x^\omega}$.  Indeed, by the definition of $\Pi_x$,
\begin{displaymath}
    \Pi_x(\alpha,\beta)
    =J_x(q_x^*\alpha,q_x^*\beta)
    =\omega_x\left(\omega_x^\sharp q_x^*\alpha,
    \omega_x^\sharp q_x^*\beta\right),
\end{displaymath}
and since $\omega_x^\sharp\operatorname{Ann}(T_x)=T_x^\omega$, both vectors $\omega_x^\sharp q_x^*\alpha$ and $\omega_x^\sharp q_x^*\beta$ lie in $T_x^\omega$.  Hence $\Pi_x$ agrees with $\omega_x|_{T_x^\omega}$ under the natural identification.

The radical of the latter is
\begin{displaymath}
    \operatorname{rad} \left(\omega_x|_{T_x^\omega}\right)
    =\{\xi\in T_x^\omega:\omega_x(\xi,\eta)=0,
    \ \forall\eta\in T_x^\omega\}
    =T_x^\omega\cap(T_x^\omega)^\omega
    =T_x^\omega\cap T_x,
\end{displaymath}
where $(T_x^\omega)^\omega=T_x$ follows from the nondegeneracy of $\omega_x$.  This is also the radical of $\omega_x|_{T_x}$. Since $\operatorname{rank}(\omega_x|_{T_x})=2s$, the Rank-Nullity Theorem implies
\begin{displaymath}
    \dim(T_x^\omega\cap T_x)
    =\dim T_x-\operatorname{rank}(\omega_x|_{T_x})
    =d-2s.
\end{displaymath}
Moreover, $\dim T_x^\omega=2n-d$.  Therefore, on setting
\begin{displaymath}
    \mathcal C_x=\ker\Pi_x^\sharp,
    \qquad k=\dim\mathcal C_x,
    \qquad 2\ell=\operatorname{rank}\Pi_x,
\end{displaymath}
we obtain the pointwise identities
\begin{equation}\label{eq:normal-poisson-dimensions}
    k=d-2s,
    \qquad 2\ell=2n-d-k=2n-2d+2s,
    \qquad k+\ell=n-s.
\end{equation}
These formulas do not require the rank of $\omega|_{TN}$ to be constant in advance.

\begin{lemma}\label{lem:carrier-isotropic}
    The restriction of $\omega$ to $TN$ vanishes identically. In particular, $d\leq n$, and on all of $N$,
    \begin{displaymath}
        \dim\mathcal C_x=d,
        \qquad \operatorname{rank}\Pi_x=2(n-d).
    \end{displaymath}
\end{lemma}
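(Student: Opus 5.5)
The plan is to compare two counts at a generic base point: the number of fiberwise differentials that are linearly independent, and the largest possible dimension of an isotropic subspace of $(W_x^*,\Pi_x)$. This comparison forces $s=0$ almost everywhere. Lemma~\ref{lem:carrier-zero-set} then extends the vanishing to all of $N$.

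\emph{Step 1 (isotropic bound).} Fix $x\in N$ and recall $\mathcal C_x=\ker\Pi_x^\sharp$, with $k=\dim\mathcal C_x$ and $2\ell=\operatorname{rank}\Pi_x$. The bivector $\Pi_x$ descends to a nondegenerate skew form on $W_x^*/\mathcal C_x$, which has dimension $2\ell$. Let $L\subset W_x^*$ be isotropic for $\Pi_x$. Its image $L/(L\cap\mathcal C_x)$ is isotropic for this symplectic form, so it has dimension at most $\ell$. Hence
\begin{displaymath}
\dim L\le k+\ell=n-s,
\end{displaymath}
using the pointwise identity \eqref{eq:normal-poisson-dimensions}.

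\emph{Step 2 (apply at generic points).} Take $x$ in the full-measure set where \eqref{eq:fiber-algebraic-independence} holds. By Lemma~\ref{lem:jacobian-criterion}, the set $\mathcal O_x$ of \eqref{eq:fiber-regular-set} is nonempty, so we may pick $v\in\mathcal O_x$. The covectors $d_vP_1(x;v),\ldots,d_vP_n(x;v)$ are then linearly independent. By \eqref{eq:leading-poisson-commutation}, they span an isotropic subspace of $(W_x^*,\Pi_x)$. Step 1 gives $n\le n-s$, so $s=0$, i.e.\ $\omega_x|_{T_xN}=0$, for $\mu$-almost every $x$.

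\emph{Step 3 (extend to all of $N$).} Consider the analytic section $\sigma=\omega|_{TN}$ of $\Lambda^2T^*N$. The flow preserves both $\omega$ and $TN$, so $(\varphi^t)^*\sigma=\sigma$ on $N$. Consequently the zero set of $\sigma$ is flow-invariant and relatively closed. Step 2 shows that $\sigma$ vanishes $\mu$-almost everywhere. The second part of Lemma~\ref{lem:carrier-zero-set} then gives $\sigma\equiv0$ on $N$. Thus $T_xN$ is isotropic in the $2n$-dimensional symplectic space $T_xP$, which forces $d\le n$. With $s=0$ at every point, \eqref{eq:normal-poisson-dimensions} yields $k=d$ and $2\ell=2n-2d$ everywhere on $N$.

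\emph{Expected obstacle.} The only delicate point is Step 2: it needs a base point where genuine linear independence of the fiber differentials holds simultaneously with the isotropy relation. This is exactly what \eqref{eq:fiber-algebraic-independence} together with Lemma~\ref{lem:jacobian-criterion} provide. Isotropy holds at every $v$, and a single $v\in\mathcal O_x$ suffices. Everything else is linear algebra plus the minimal-carrier zero-set principle.
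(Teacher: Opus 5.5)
Your proposal is correct and is essentially the paper's own proof. The bound $\dim L\le k+\ell=n-s$ for $\Pi_x$-isotropic subspaces, applied to the span of $d_vP_1(x;v),\ldots,d_vP_n(x;v)$ for some $v\in\mathcal O_x$, forces $s=0$ for $\mu$-almost every $x$; Lemma~\ref{lem:carrier-zero-set}, applied to the flow-invariant analytic section $\omega|_{TN}$, then extends this to all of $N$, after which $d\le n$, $\dim\mathcal C_x=d$ and $\operatorname{rank}\Pi_x=2(n-d)$ follow exactly as you describe.
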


\begin{proof}
    For an alternating form of rank $2\ell$ and radical dimension $k$, an isotropic subspace $L$ has dimension at most $k+\ell$. Indeed, the image of $L$ in the quotient by the radical is isotropic in a symplectic space of dimension $2\ell$, so its dimension is at most $\ell$. The kernel of that projection on $L$ has dimension at most $k$.

    For almost every $x$, take any $v\in\mathcal O_x$ and set
    \begin{equation}\label{eq:gradient-isotropic-subspace}
        L_v=\operatorname{span}_{\mathbb R}
        \{d_vP_1(x;v),\ldots,d_vP_n(x;v)\}\subset W_x^*.
    \end{equation}
    By \eqref{eq:fiber-regular-set}, $\dim L_v=n$.  By \eqref{eq:leading-poisson-commutation}, it is isotropic for $\Pi_x$. Using \eqref{eq:normal-poisson-dimensions}, we get
    \begin{displaymath}
        n\leq k+\ell=n-s.
    \end{displaymath}
    Thus $s=0$ for almost every $x$.

    Consider the analytic section
    \begin{displaymath}
        S(x):=\omega_x|_{T_xN}\in\Lambda^2T_x^*N,
        \qquad x\in N,
    \end{displaymath}
    so that $S\in\Gamma^\omega(N,\Lambda^2T^*N)$.  For every $x\in N$ and every $u,w\in T_xN$, the symplectic invariance of $\omega$ and the invariance $\varphi^t(N)=N$ give
    \begin{displaymath}
        S(\varphi^t x)(A^{t}(x)u,A^{t}(x)w)
        =\omega_{\varphi^t x}(A^{t}(x)u,A^{t}(x)w)
        =\omega_x(u,w)
        =S(x)(u,w),
    \end{displaymath}
    where $A^{t}(x)=D\varphi^t_x$.  Hence $S(\varphi^t x)=0$ if and only if $S(x)=0$, so the zero set
    \begin{displaymath}
        Z_S=\{x\in N:S(x)=0\}
    \end{displaymath}
    is invariant under the flow. Since $s=0$ for almost every $x$, we have $\mu(Z_S)=1$.  Lemma~\ref{lem:carrier-zero-set} now gives
    \begin{equation}\label{eq:carrier-isotropic}
        \omega|_{TN}\equiv0\quad\text{on }N.
    \end{equation}

    It remains to deduce the stated dimensions. From \eqref{eq:carrier-isotropic} we have $T_xN\subseteq T_xN^\omega$ for every $x\in N$, so
    \begin{displaymath}
        \dim(T_xN\cap T_xN^\omega)=\dim T_xN=d.
    \end{displaymath}
    Combining this with the identity $k=d-2s$ in \eqref{eq:normal-poisson-dimensions} and with $s=0$ yields
    \begin{displaymath}
        k=d,\qquad 2\ell=2n-2d,\qquad \ell=n-d.
    \end{displaymath}
    Thus
    \begin{displaymath}
        \dim\mathcal C_x=d,
        \qquad \operatorname{rank}\Pi_x=2(n-d).
    \end{displaymath}
    Finally, since $T_xN$ is isotropic for the nondegenerate form $\omega_x$ on $T_xP$, we have $\dim T_xN\leq\tfrac12\dim T_xP=n$, i.e.\ $d\leq n$.
\end{proof}

The carrier is therefore isotropic. We next control its tangent Lyapunov exponents. The dimension bound has shown that the carrier is isotropic. We next use the invariance of the normal polynomials to control the Lyapunov exponents tangent to this carrier.

\subsection{Invariant tensors and zero tangent exponents}\label{sec:zero-exponents}

\subsubsection{Log-integrability of the induced cocycles}

Fix a smooth Riemannian metric in an ambient neighborhood of $\Sigma$. Use its norm on $TP|_\Sigma$, the restricted norm on $TN$, and the quotient norm on $W$. Write
\begin{displaymath}
    A_x=D\varphi^1_x:T_xP\longrightarrow T_{fx}P,
    \qquad C_x=C^1(x):W_x\longrightarrow W_{fx}.
\end{displaymath}
The ambient maps $A_x$ and $A_x^{-1}$ depend continuously on $x\in\Sigma$. By compactness, there is $B\geq1$ such that
\begin{displaymath}
    \sup_{x\in\Sigma}\max\{\|A_x\|,\|A_x^{-1}\|\}\leq B.
\end{displaymath}
The restrictions to $TN$ and their inverses satisfy the same bound. Since $A_x(T_xN)=T_{fx}N$, for every quotient vector $[v]\in W_x$,
\begin{align*}
    \|C_x[v]\|&=\inf_{w'\in T_{fx}N}\|A_xv+w'\|\\
              &=\inf_{w\in T_xN}\|A_x(v+w)\|
              \leq B\inf_{w\in T_xN}\|v+w\|=B\|[v]\|.
\end{align*}
The same calculation for the inverse gives
\begin{displaymath}
    \|C_x\|\leq B,\qquad \|C_x^{-1}\|\leq B.
\end{displaymath}

The forward cocycle on the dual bundle is
\begin{equation}\label{eq:dual-cocycle}
    \widehat C_x=(C_x^{-1})^*:W_x^*\longrightarrow W_{fx}^*.
\end{equation}
Its direction is chosen so that $\widehat {C} _ {x} $ is a forward cocycle on $W^*$. Since the adjoint of a linear operator has the same norm,
\begin{displaymath}
    \|\widehat C_x\|=\|(C_x^{-1})^*\|=\|C_x^{-1}\|\leq B,
    \qquad
    \|\widehat C_x^{-1}\|=\|C_x^*\|=\|C_x\|\leq B.
\end{displaymath}
Their tensor powers of any fixed degree $a$ have norm at most $B^a$, and the same is true after restriction to the symmetric tensors with the induced norm. Indeed, fix Hilbert tensor norms on $W_x^{\otimes a}$ and on $(W_x^*)^{\otimes a}$ induced by the chosen inner products on $W_x$ and $W_x^*$, respectively. Let $T$ be a linear operator with $\|T\|\leq B$.  By the singular value decomposition, the singular values of $T^{\otimes a}$ are the $a$-fold products of the singular values of $T$; hence
\begin{displaymath}
    \|T^{\otimes a}\|=\|T\|^a\leq B^a.
\end{displaymath}
The symmetric power is the restriction of $T^{\otimes a}$ to the symmetric subspace, so
\begin{displaymath}
    \|\operatorname{Sym}^a T\|\leq\|T\|^a\leq B^a.
\end{displaymath}
The same argument applies to the inverses.

Thus, every cocycle $L$ used below satisfies the two integrability conditions
\begin{equation}\label{eq:oseledets-integrability}
    \int_N\log^+\|L_x\|\,d\mu(x)<\infty,\qquad\int_N\log^+\|L_x^{-1}\|\,d\mu(x)<\infty.
\end{equation}
Indeed, since $\|L_x\|,\|L_x^{-1}\|\leq B_a$ uniformly in $x$ for some constant $B_a$ depending only on $a$, we have
\begin{displaymath}
    \log^+\|L_x\|\leq\log^+B_a<\infty,\qquad\log^+\|L_x^{-1}\|\leq\log^+B_a<\infty,
\end{displaymath}
and $\mu$ is a probability measure, so both integrals are finite.

The invertible multiplicative ergodic theorem applies to the probability space $(N,\mu,f)$ \cite{Oseledets1968, Filip2019}. The required bounds follow from compactness of the energy hypersurface and hold even when \(N\) is noncompact.

\subsubsection{An invariant measurable section has only zero exponents}

\begin{lemma}\label{lem:invariant-section-zero-exponent}
    Let $L$ be an invertible finite-dimensional measurable linear cocycle over an invertible ergodic probability-preserving transformation, satisfying \eqref{eq:oseledets-integrability}. Suppose a measurable section $s$ with $\parallel s(x)\parallel < \infty$ almost everywhere satisfies
    \begin{displaymath}
        s(fx)=L_xs(x)
    \end{displaymath}
    almost everywhere. Then $s(x)$ belongs to the zero Lyapunov subspace almost everywhere. Neither integrability nor boundedness of $\|s\|$ is required.
\end{lemma}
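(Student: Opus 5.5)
The plan is to use the Oseledets splitting together with Poincaré recurrence (or Birkhoff/Kac arguments) applied to the function $x\mapsto\log\|s(x)\|$, without integrating it. First discard the null set where $s(x)=0$; there is nothing to prove at such points. By invariance, $s(f^mx)=L^{(m)}_xs(x)$ almost everywhere for every $m\in\mathbb Z$, where $L^{(m)}_x$ denotes the cocycle product (with $L^{(m)}_x=(L^{(-m)}_{f^mx})^{-1}$ for $m<0$). The multiplicative ergodic theorem applies under \eqref{eq:oseledets-integrability}. It gives an invariant splitting $\bigoplus_i E_i(x)$ with exponents $\lambda_1>\cdots>\lambda_p$. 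For every nonzero $u\in E_i(x)$, $\frac1m\log\|L^{(m)}_xu\|\to\lambda_i$ as $m\to\pm\infty$. More generally, for every nonzero $u$ the forward limit equals the largest exponent among the components of $u$ in the splitting, and the backward limit (as $m\to-\infty$) equals the smallest such exponent.

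The key step is to show that $\frac1m\log\|s(f^mx)\|$ cannot converge to a nonzero number along a full-measure set of $x$. Set $g(x)=\log\|s(x)\|$, which is a finite measurable function almost everywhere. Choose $R$ such that the set $G_R=\{|g|\le R\}$ has positive measure. Poincaré recurrence, applied to $f$ and to $f^{-1}$, shows that for almost every $x\in G_R$ the orbit returns to $G_R$ infinitely often in both time directions. Along those return times $m_j\to\pm\infty$, we have $|\log\|L^{(m_j)}_xs(x)\||\le R$, so $\frac1{m_j}\log\|L^{(m_j)}_xs(x)\|\to0$. Since the full limits exist, both the forward and backward exponents of $s(x)$ are zero. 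Ergodicity and invariance of $s$ then extend this from $G_R$ to almost every $x$: the set of points where the conclusion holds is $f$-invariant, so it has full measure once it has positive measure.

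To conclude, decompose $s(x)=\sum_i s_i(x)$ with $s_i(x)\in E_i(x)$. A forward exponent of $0$ means that every index $i$ with $s_i\ne0$ has $\lambda_i\le0$. A backward exponent of $0$ means that every such index has $\lambda_i\ge0$. Hence only the component with $\lambda_i=0$ survives, and $s(x)$ lies in the zero Lyapunov subspace. The main point needing care is the two-sided characterization of growth rates for vectors that are not in a single Oseledets space. This is exactly where the invertible version of the theorem is used: the forward limit detects the top exponent present in $u$, while the backward limit detects the bottom one. No integrability of $\log\|s\|$ enters, because only recurrence to a set of bounded $g$ is used.
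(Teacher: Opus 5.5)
Your proof is correct. Its engine is the same as the paper's: Poincar\'e recurrence, in both time directions, to a set where the equivariant section has bounded norm, which is incompatible with nonzero exponential growth.

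The two routes differ in how vectors with several Oseledets components are treated. The paper first projects, setting $s_\rho=\pi_\rho s$. It checks that each $s_\rho$ is again a measurable equivariant section that is finite almost everywhere, though possibly unbounded, because $\|\pi_\rho(x)\|$ need not be bounded. It then runs recurrence separately on the sets $\{\|s_\rho\|\le r\}$. In this way it only uses growth rates of vectors lying in a single Oseledets subspace.

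You instead run recurrence once on $s$ itself. You then quote the finer conclusion of the invertible theorem: at a regular point, the forward rate of a nonzero vector is the largest exponent among its nonzero components, and the backward rate is the smallest. This is the content of the forward and backward Oseledets filtrations. Your route removes the projections and the discussion of their measurability and norms, at the cost of relying on the filtration statement.

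A few minor points:
\begin{itemize}
\item The set $\{s=0\}$ is invariant but need not be null. If it has full measure the lemma is trivial, so nothing is lost.
\item Your appeal to ergodicity to pass from $G_R$ to almost every point can be replaced, as in the paper, by a union over $R\in\mathbb N$.
\item Only the upper bound $\|s\|\le e^R$ is needed. Forward returns then force the largest exponent present in $s(x)$ to be $\le 0$, and backward returns force the smallest one to be $\ge 0$.
\end{itemize}
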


\begin{proof}
    By \eqref{eq:oseledets-integrability}, the invertible multiplicative ergodic theorem applies to the measurable vector bundle $\mathcal E$ over the probability-preserving system $(N,\mu,f)$. It yields a full-measure $f$-invariant Oseledets regular set $\mathcal R$ on which
    \begin{displaymath}
        \mathcal E_x=\bigoplus_\rho \mathcal E_x^\rho,
    \end{displaymath}
    where $\rho$ ranges over finitely many Lyapunov exponents. Each subspace $\mathcal {E} _ {x} ^\rho$ is invariant along the orbit,
    \begin{displaymath}
        L_x\mathcal E_x^\rho=\mathcal E_{fx}^\rho.
    \end{displaymath}
    Let
    \begin{displaymath}
        L_x:\mathcal E_x\longrightarrow \mathcal E_{fx}
    \end{displaymath}
    be the cocycle. We work on a common invariant set of full measure on which the Oseledets splitting and the equivariance identity for $s$ hold along the entire two-sided orbit. If the identity $s(fx)=L_xs(x)$ holds only on a full-measure set $E$, we replace $E$ by
    \begin{displaymath}
        \bigcap_{k\in\mathbb Z}f^kE
    \end{displaymath}
    and intersect it with the Oseledets regular set; this remains a full-measure set, and all iterated identities below are then valid simultaneously. For $m\ge 1$, define
    \begin{displaymath}
        L_x^{(m)}=L_{f^{m-1}x}\cdots L_x,
        \qquad
        L_x^{(-m)}=\left(L_{f^{-m}x}^{(m)}\right)^{-1}.
    \end{displaymath}
    Then, for every nonzero $v\in \mathcal E_x^\rho$,
    \begin{displaymath}
        \lim_{m\to\infty}\frac1m\log\|L_x^{(m)}v\|=\rho,
        \qquad
        \lim_{m\to\infty}\frac1m\log\|L_x^{(-m)}v\|=-\rho.
    \end{displaymath}
    Since $f$ is ergodic, the Lyapunov exponents are $\mu$-almost everywhere constant, and there are only finitely many distinct values.

    For each $\rho$, let $\pi_\rho(x):\mathcal E_x\to \mathcal E_x$ be the projection onto $\mathcal{E}_x^\rho$ along the other Oseledets subspaces.  It is linear, idempotent, and satisfies
    \begin{displaymath}
        \operatorname{im}\pi_\rho(x)=\mathcal E_x^\rho,
        \qquad
        \ker\pi_\rho(x)=\bigoplus_{\rho'\neq\rho}\mathcal E_x^{\rho'},
        \qquad
        \sum_\rho\pi_\rho(x)=\mathrm{id}_{\mathcal E_x}.
    \end{displaymath}
    Moreover, it is equivariant:
    \begin{displaymath}
        \pi_\rho(fx)\circ L_x=L_x\circ\pi_\rho(x).
    \end{displaymath}
    Consequently, if $s(fx)=L_xs(x)$ and we set $s_\rho(x)=\pi_\rho(x)s(x)$, then
    \begin{displaymath}
        s_\rho(fx)
        =\pi_\rho(fx)s(fx)
        =\pi_\rho(fx)L_xs(x)
        =L_x\pi_\rho(x)s(x)
        =L_xs_\rho(x),
    \end{displaymath}
    so $s_\rho$ is again an invariant section. It is measurable and has finite norm almost everywhere, because $\pi_\rho$ is measurable and $s$ has finite norm almost everywhere.

    The projection $\pi_\rho(x)$ need not have uniformly bounded norm: its norm may grow with $x$, since $\mathcal{E}_x^\rho$ and the sum of the other Oseledets subspaces may become nearly parallel in the original norm. Thus we cannot assert that $\|s_\rho\|$ is bounded or integrable. This is precisely why the present lemma assumes only that $s$ has finite norm almost everywhere.

    For each positive integer $r$, set
    \begin{displaymath}
        B_{r,\rho}=\{x:\|s_\rho(x)\|\le r\}.
    \end{displaymath}
    These sets cover the Oseledets set as $r$ varies. By the Poincar\'e recurrence theorem, for each $B_{r,\rho}$ and for both $f$ and $f^{-1}$, almost every point of $B_{r,\rho}$ returns to $B_{r,\rho}$ infinitely often in both time directions.  There are countably many sets $B_{r,\rho}$, so the union of the resulting null exceptional sets is again null. For every remaining $x$, choose $r$ with $x\in B_{r,\rho}$; then there are arbitrarily large positive and negative return times $m_j$ with $f^{m_j}x\in B_{r,\rho}$.

    If $\rho>0$ and $s_\rho(x)\ne0$, the multiplicative ergodic theorem gives
    \begin{displaymath}
        \lim_{m\to\infty}\frac1m
        \log\|L_x^{(m)}s_\rho(x)\|=\rho>0.
    \end{displaymath}
    At positive return times $m_j\to\infty$, however,
    \begin{displaymath}
        \|L_x^{(m_j)}s_\rho(x)\|
        =\|s_\rho(f^{m_j}x)\|\le r,
    \end{displaymath}
    which is a contradiction. If $\rho<0$, apply the same argument in negative time: the backward growth exponent is $-\rho>0$, again contradicting the bounded values at return times. Every component with a nonzero exponent vanishes, as claimed.
\end{proof}

\subsubsection{Zero total weights of the leading polynomials}

At an Oseledets-regular point for the quotient cocycle, write
\begin{equation}\label{eq:quotient-oseledets-splitting}
    W_x=\bigoplus_\lambda W_x^\lambda.
\end{equation}
Here $\lambda$ ranges over the finitely many Lyapunov exponents of the quotient cocycle $C_x$, and each subspace $W_x^\lambda$ is invariant along the orbit:
\begin{displaymath}
    C_xW_x^\lambda=W_{fx}^\lambda.
\end{displaymath}
By the Oseledets theorem, $C_x$ has exponent $\lambda$ on $W_x^\lambda$, i.e.\ for every nonzero $v\in W_x^\lambda$,
\begin{displaymath}
    \lim_{m\to\pm\infty}\frac1m\log\|C_x^{(m)}v\|=\lambda.
\end{displaymath}
The dual cocycle \eqref{eq:dual-cocycle} is
\begin{displaymath}
    \widehat C_x=(C_x^{-1})^*:W_x^*\longrightarrow W_{fx}^*.
\end{displaymath}
It is a forward cocycle on the dual bundle $W^*$. By the Oseledets theorem, the forward cocycle $C$ has exponent $\lambda$ on $W_x^\lambda$, and the dual cocycle $\widehat C$ has exponent $-\lambda$ on the annihilator
\begin{displaymath}
    (W_x^\lambda)^*
    :=\operatorname{Ann}_{W_x^*}\!\left(
    \bigoplus_{\lambda'\ne\lambda} W_x^{\lambda'}\right).
\end{displaymath}
The iterations of the forward contragredient cocycle satisfy
\begin{displaymath}
    \widehat C_x^{(m)}=\left((C_x^{(m)})^{-1}\right)^*.
\end{displaymath}
Therefore, the dual Oseledets rule gives exponent $-\lambda$ on
\begin{displaymath}
    (W_x^\lambda)^*
    :=\operatorname{Ann}_{W_x^*}\!\left(
    \bigoplus_{\lambda'\ne\lambda} W_x^{\lambda'}\right).
\end{displaymath}
Here $(W_x^\lambda)^*$ denotes the corresponding subspace of $W_x^*$, identified with the dual of $W_x^\lambda$ by restriction. See \cite[Section~3.1.1]{Filip2019} for the dual and tensor rules used here. Here $(W_x^\lambda)^*$ is regarded as the subspace of $W_x^*$ that annihilates all the other Oseledets subspaces, so that
\begin{displaymath}
    W_x^*=\bigoplus_\lambda (W_x^\lambda)^*,
\end{displaymath}
with exponent $-\lambda$ on the summand $(W_x^\lambda)^*$.

Tensor-product exponents add, and the same is true for symmetric powers; see \cite[Section~3.1.1]{Filip2019}. One can also obtain this rule directly from the two-sided Oseledets estimates in a Lyapunov norm. On a tensor product of subspaces with exponents $\rho_1,\ldots,\rho_a$, multiplying the upper and lower bounds yields the exponent $\rho_1+\cdots+\rho_a$. The measurable changes of norm have subexponential growth. Passing to the symmetric subspace preserves the decomposition by total weight, i.e.\ the induced exponent on each symmetric tensor of a fixed total weight is the sum of the exponents of its factors.

Formula~\eqref{eq:polynomial-equivariance} at $t=1$ is equivalent to
\begin{equation}\label{eq:invariant-polynomial-tensor}
    P_i(fx)=\operatorname{Sym}^{a_i}(\widehat C_x)P_i(x).
\end{equation}
In fact, the equivariance identity reads $P_i(fx;C_xv)=P_i(x;v)$ for every $v\in W_x$.  Writing $\alpha=C_xv\in W_{fx}$, so that $v=C_x^{-1}\alpha$, we obtain
\begin{displaymath}
    P_i(fx;\alpha)
    =P_i(x;C_x^{-1}\alpha)
    =\Bigl(\operatorname{Sym}^{a_i}(C_x^{-1})^*P_i(x)\Bigr)(\alpha)
    =\Bigl(\operatorname{Sym}^{a_i}(\widehat C_x)P_i(x)\Bigr)(\alpha).
\end{displaymath}
Here $\operatorname{Sym}^{a_i}(\widehat C_x)$ denotes the induced action of $\widehat C_x$ on the symmetric power $\operatorname{Sym}^{a_i}W_x^*$, which is the natural space in which $P_i(x)$ lies.  Since both $P_i(fx)$ and $P_i(x)$ are homogeneous polynomials of degree $a_i$, this yields \eqref{eq:invariant-polynomial-tensor}.

Each $P_i$ is a measurable invariant section for this symmetric-power cocycle, and has finite norm at every base point. Indeed, by Section~\ref{subsec:1}, $P_i$ is an analytic section of $\operatorname{Sym}^{a_i}W^*$, hence has finite norm at every base point and is measurable in local analytic frames; and \eqref{eq:invariant-polynomial-tensor} is precisely the invariance required by Lemma~\ref{lem:invariant-section-zero-exponent}. That lemma therefore says that $P_i$ has only zero total tensor weight.

To record the resulting polynomial identity, define the linear map $D_x: W_x\to W_x$ by
\begin{displaymath}
    D_x|_{W_x^\lambda}=\lambda\,\mathrm{id}.
\end{displaymath}
Choose a linear basis adapted to \eqref{eq:quotient-oseledets-splitting}, and let $\lambda_1,\ldots,\lambda_q$ be the exponents associated to its coordinate vectors. A monomial $z^\beta$ represents a dual symmetric tensor of exponent $-\sum_j\beta_j\lambda_j$. Since $P_i$ has only zero total tensor weight, every nonzero monomial of $P_i(x;z)$ satisfies
\begin{displaymath}
    \sum_j\beta_j\lambda_j=0.
\end{displaymath}
Equivalently,
\begin{displaymath}
    P_i(x;e^{tD_x}v)=P_i(x;v)
    \quad(t\in\mathbb R,\ v\in W_x).
\end{displaymath}
In fact, $e^{tD_x}$ acts on $W_x^\lambda$ by multiplication by $e^{t\lambda}$, so a monomial $z^\beta$, regarded as a polynomial on $W_x$, transforms as
\begin{displaymath}
    z^\beta(e^{tD_x}v)
    =e^{t\sum_j\beta_j\lambda_j}z^\beta(v),
\end{displaymath}
and the exponent vanishes by the identity above. Differentiating in $t$ at zero gives
\begin{equation}\label{eq:weight-differential-identity}
    d_vP_i(x;v)(D_xv)=0
    \quad(v\in W_x,\ 1\leq i\leq n)
\end{equation}
for almost every $x$.

A total weight of zero does not mean that $P_i$ depends only on zero-exponent variables. A product of variables with exponents $\lambda$ and $-\lambda$ also has total weight zero. Identity \eqref{eq:weight-differential-identity}, which includes such cancellations, is the precise consequence that will be used.

\subsubsection{The Poisson radical lies in the zero dual exponent space}

By Lemma~\ref{lem:carrier-isotropic}, $\mathcal C_x=\ker\Pi_x^\sharp$ has dimension $d$, and the symplectic quotient $W_x^*/\mathcal C_x$ has dimension $2(n-d)$.  Fix a point $x$ at which both fiberwise independence and all conclusions of Oseledec's theorem hold.

For $v\in\mathcal O_x$, let $L_v$ be the subspace from \eqref{eq:gradient-isotropic-subspace}.  By \eqref{eq:fiber-regular-set}, $\dim L_v=n$.  By \eqref{eq:leading-poisson-commutation}, $L_v$ is isotropic for $\Pi_x$.  Its image in the symplectic quotient $W_x^*/\mathcal C_x$ has dimension at most $n-d$. Indeed, $W_x^*/\mathcal C_x$ has dimension $2(n-d)$ and carries the nondegenerate alternating form induced by $\Pi_x$, and an isotropic subspace of a nondegenerate symplectic space has dimension at most half of the ambient dimension. Therefore
\begin{displaymath}
    \dim(L_v\cap\mathcal C_x)
    =\dim L_v-\dim\left((L_v+\mathcal C_x)/\mathcal C_x\right)
    \geq n-(n-d)=d,
\end{displaymath}
since $(L_v+\mathcal C_x)/\mathcal C_x$ is precisely the image of $L_v$ in $W_x^*/\mathcal C_x$.  Since $\dim\mathcal C_x=d$ and $L_v\cap\mathcal C_x\subseteq\mathcal C_x$, we obtain $\dim(L_v\cap\mathcal C_x)=d$, which proves
\begin{equation}\label{eq:radical-in-gradients}
    \mathcal C_x\subset L_v\quad(v\in\mathcal O_x).
\end{equation}

Every covector in $L_v$ annihilates $D_xv$.  Indeed, by \eqref{eq:weight-differential-identity},
\begin{displaymath}
    d_vP_i(x;v)(D_xv)=0,\qquad 1\leq i\leq n,
\end{displaymath}
and $L_v$ is spanned by $d_vP_1(x;v),\ldots,d_vP_n(x;v)$, so the vanishing extends to every element of $L_v$ by linearity.  Now fix $c\in\mathcal C_x$.  Using \eqref{eq:radical-in-gradients}, we have $c\in L_v$, hence
\begin{displaymath}
    c(D_xv)=0\quad(v\in\mathcal O_x).
\end{displaymath}
The left side is a linear function of $v$. Since $\mathcal {O} _ {x} $ is dense in $ W _ {x} $, it vanishes for all $v\in W _ {x} $.  Thus $c$ annihilates
\begin{displaymath}
    \operatorname{im}D_x=\bigoplus_{\lambda\ne0}W_x^\lambda.
\end{displaymath}
In fact, $D_x$ acts as multiplication by $\lambda$ on each $W_x^\lambda$, so it vanishes on $W_x^0$ and is invertible on each $W_x^\lambda$ with $\lambda\ne0$; consequently its image is exactly $\bigoplus_{\lambda\ne0}W_x^\lambda$.  The annihilator of this sum is exactly the zero-exponent subspace for the dual cocycle, i.e.
\begin{displaymath}
    \operatorname{Ann}_{W_x^*}\!\left(
    \bigoplus_{\lambda\ne0}W_x^\lambda\right)
    =(W_x^*)^0.
\end{displaymath}
Equivalently, in terms of the dual of the quotient,
\begin{displaymath}
    \operatorname{Ann}_{W_x^*}\!\left(
    \bigoplus_{\lambda\ne0}W_x^\lambda\right)
    \cong
    \left(W_x\Big/\bigoplus_{\lambda\ne0}W_x^\lambda\right)^*.
\end{displaymath}
Hence
\begin{equation}\label{eq:radical-zero-exponent}
    \mathcal C_x\subset (W_x^*)^0
    \quad\text{for }\mu\text{-almost every }x.
\end{equation}
Here, the superscript $0$ denotes the Oseledets zero exponent, not the zero vector space. Notice that no assertion that $D_x$ preserves the Poisson form was needed: the argument uses only the definition of $D_x$ and the vanishing \eqref{eq:weight-differential-identity}.

\subsubsection{A bounded equivariant identification with the tangent bundle}
For $u\in T_xN$, define
\begin{displaymath}
    b_x(u)([v])=\omega_x(u,v),\qquad [v]\in W_x.
\end{displaymath}
This is well-defined because $N$ is isotropic: if $[v]=[v']$, then $v'=v+w$ for some $w\in T_xN$, and
\begin{displaymath}
    b_x(u)([v'])=\omega_x(u,v)+\omega_x(u,w)=\omega_x(u,v)=b_x(u)([v]).
\end{displaymath}
Linearity in $u$ is immediate from the bilinearity of $\omega_x$.

In terms of the annihilator identification,
\begin{equation}\label{eq:pullback-tangent-map}
    q_x^*b_x(u)=\omega_x^\flat u.
\end{equation}
Indeed, both sides are elements of $T_x^*P$ whose value on any $v\in T_xP$ equals $\omega_x(u,v)$.

Using the sharp convention \eqref{eq:poisson-sharp}, we compute
\begin{displaymath}
    \Pi_x^\sharp b_x(u)
    =q_x\omega_x^\sharp q_x^*b_x(u)
    =q_x\omega_x^\sharp\omega_x^\flat u
    =q_x(u)=0.
\end{displaymath}
Here we used \eqref{eq:pullback-tangent-map} and the identity $\omega_x^\sharp\omega_x^\flat u=u$, together with the fact that the quotient map $q_x$ vanishes on $T_xN$.  Thus $b_x(T_xN)\subset\mathcal C_x$.  If $b_x(u)=0$, then $\omega_x(u,v)=0$ for every $v\in T_xP$, and nondegeneracy gives $u=0$; so $b_x$ is injective.  By Lemma~\ref{lem:carrier-isotropic}, $\dim\mathcal C_x=d$, and $\dim T_xN=d$.  Since $b_x$ is injective and both spaces have dimension $d$, it is also surjective, hence
\begin{equation}\label{eq:tangent-radical-isomorphism}
    b_x:T_xN\xrightarrow{\ \simeq\ }\mathcal C_x.
\end{equation}

This isomorphism is strictly equivariant. For $u\in T_xN$ and $[v]\in W_x$, symplectic invariance gives
\begin{align*}
    b_{fx}(A_xu)(C_x[v])
 &=\omega_{fx}(A_xu,A_xv)\\
 &=\omega_x(u,v)=b_x(u)([v]).
\end{align*}
Since $[v]\in W_x$ is arbitrary, the identity above says that the two linear forms on $W_{fx}$ coincide on the image of $C_x$.  As $C_x$ is surjective, this implies
\begin{displaymath}
    b_{fx}(A_xu)=\widehat C_x b_x(u)
\end{displaymath}
as elements of $W_{fx}^*$.  Hence, since $u\in T_xN$ is also arbitrary,
\begin{equation}\label{eq:bounded-conjugacy-equivariance}
    b_{fx}\circ A_x|_{T_xN}=\widehat C_x\circ b_x.
\end{equation}
The forward dual map is $(C_x^{-1})^*$, as in \eqref{eq:dual-cocycle}; the unmodified pullback $C_x^*$ goes in the opposite direction.

We also need uniform bounds, rather than just a pointwise measurable isomorphism. On the compact set $\Sigma$, $\omega^\flat$ and its inverse are uniformly bounded. Indeed, both are continuous in $x$, and a continuous family of linear isomorphisms on a compact set has uniformly bounded norm and inverse norm. Hence there are constants $0<c\leq C<\infty$ such that
\begin{equation}\label{eq:uniform-symplectic-isomorphism}
    c\|u\|\leq\|\omega_x^\flat u\|\leq C\|u\|
    \quad(x\in\Sigma,\ u\in T_xP).
\end{equation}
For an inner-product space $E$ and a subspace $T$, the quotient norm identifies $E/T$ isometrically with $T^\perp$, since the quotient norm of a class equals the norm of its orthogonal projection onto $T^\perp$. Its dual is therefore isometric to $\operatorname{Ann}(T)\subset E^*$. Applied here, this shows that $q_x^{*}:W_x^*\to\operatorname{Ann}(T_xN)$ is an isometry. Combining \eqref{eq:pullback-tangent-map} with \eqref{eq:uniform-symplectic-isomorphism}, we obtain
\begin{equation}\label{eq:uniform-b-bounds}
    c\|u\|\leq\|b_x(u)\|\leq C\|u\|\quad (x\in N,u\in T_xN).
\end{equation}

The constants do not depend on $x$. This uses the full nondegenerate symplectic form on $TP|_\Sigma$ and the quotient-dual norm; it does not require compactness of $N$ or a continuous extension of $TN$ to $\overline N$.

\begin{proposition}\label{prop:tangent-zero-exponents}
    For $\mu$-almost every $x\in N$ and every nonzero $u\in T_xN$,
    \begin{equation}\label{eq:tangent-zero-exponents}
        \lim_{m\to+\infty}\frac1m\log\|D\varphi^m_xu\|=0,
        \qquad
        \lim_{m\to+\infty}\frac1m\log\|D\varphi^{-m}_xu\|=0.
    \end{equation}
\end{proposition}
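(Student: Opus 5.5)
The plan is to transport the tangent cocycle $A_x|_{T_xN}$ to the dual quotient cocycle $\widehat C_x$ restricted to the Poisson radical $\mathcal C_x$, using the bounded equivariant isomorphism $b_x$ from \eqref{eq:tangent-radical-isomorphism}. We then read off the exponents from \eqref{eq:radical-zero-exponent}. Iterating \eqref{eq:bounded-conjugacy-equivariance} along the orbit gives, for every $m\in\mathbb Z$,
\begin{displaymath}
    b_{f^mx}\circ D\varphi^m_x|_{T_xN}=\widehat C^{(m)}_x\circ b_x .
\end{displaymath}
For negative $m$ we apply the identity at $f^{-1}x,\dots,f^{-m}x$ and invert. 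Here we use that $A_x$ maps $T_xN$ onto $T_{fx}N$ and that $b$ is an isomorphism at every point.

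Fix $x$ in the full-measure set where Oseledets' theorem holds for $\widehat C$ and where \eqref{eq:radical-zero-exponent} holds. Since $\mathcal C_x=b_x(T_xN)$ is carried to $\mathcal C_{f^mx}$ by $\widehat C^{(m)}_x$, we should intersect with the orbit of that set; it is $f$-invariant up to a null set, so we may take an invariant full-measure set. For nonzero $u\in T_xN$, the covector $c=b_x(u)$ is a nonzero element of $(W^*_x)^0$. Oseledets' theorem for the dual cocycle then gives
\begin{displaymath}
    \lim_{m\to\pm\infty}\tfrac1{|m|}\log\|\widehat C^{(m)}_x c\|=0 .
\end{displaymath}
Here we use that $(W_x^*)^0$ is the exponent-zero Oseledets summand, on which forward and backward growth rates are both zero.

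The uniform bounds \eqref{eq:uniform-b-bounds} give, with constants independent of the base point,
\begin{displaymath}
    c\,\|D\varphi^m_xu\|\le\|\widehat C^{(m)}_x b_x(u)\|\le C\,\|D\varphi^m_xu\| .
\end{displaymath}
Taking $\frac1m\log$ and letting $m\to\pm\infty$, the constants $\log c$ and $\log C$ disappear, which yields both limits in \eqref{eq:tangent-zero-exponents}. Note that $D\varphi^{-m}_x$ coincides with the $(-m)$-fold cocycle product, because $f=\varphi^1$.

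The main point requiring care is the uniformity of the comparison constants: a merely measurable conjugacy would require a tempered-growth argument. That is exactly why \eqref{eq:uniform-b-bounds} was established using the compactness of $\Sigma$ rather than of $N$. The other delicate point is the bookkeeping of full-measure sets. We need \eqref{eq:radical-zero-exponent} only at the base point $x$, not along its orbit, because the Oseledets subspaces are equivariant and the growth rate of $\widehat C^{(m)}_x c$ is determined at $x$. So no further invariance of the exceptional set is actually required beyond Oseledets regularity.
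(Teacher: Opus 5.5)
Your proposal is correct and follows the paper's proof essentially step for step. Like the paper, you iterate the strict equivariance $b_{f^mx}\circ D\varphi^m_x|_{T_xN}=\widehat C^{(m)}_x\circ b_x$, use $0\ne b_x(u)\in\mathcal C_x\subset(W_x^*)^0$, and transfer the two-sided zero growth of the dual cocycle to $D\varphi^m_xu$ through the uniform bounds \eqref{eq:uniform-b-bounds}; your closing remark is also accurate: since the equivariance identity and those bounds hold at every point of $N$, you only need Oseledets regularity and \eqref{eq:radical-zero-exponent} at the base point, which is slightly sharper than the paper's passage to a common invariant full-measure set.
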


\begin{proof}
    Choose a common invariant full-measure set for the preceding conclusions. For $x$ in that set, \eqref{eq:radical-zero-exponent} and \eqref{eq:tangent-radical-isomorphism} imply $0\ne b_x(u)\in(W_x^*)^0$ whenever $0\ne u\in T_xN$.  Indeed, \eqref{eq:tangent-radical-isomorphism} gives $b_x(u)\in\mathcal C_x$ and $b_x(u)\ne0$, while \eqref{eq:radical-zero-exponent} gives $\mathcal C_x\subset(W_x^*)^0$.

    Iterating \eqref{eq:bounded-conjugacy-equivariance}, for every integer $m$,
    \begin{displaymath}
        b_{f^m x}(D\varphi^m_xu)=\widehat C_x^{(m)}b_x(u).
    \end{displaymath}
    This is obtained by applying the equivariance identity along the orbit of $x$ and composing the resulting expressions for $m=1,2,\ldots$ and $m=-1,-2,\ldots$.

    The two norms in this identity are uniformly comparable by \eqref{eq:uniform-b-bounds}: there exist constants $c,C>0$ independent of $x,u,m$ such that
    \begin{displaymath}
        c\|D\varphi^m_xu\|
        \leq\|\widehat C_x^{(m)}b_x(u)\|
        \leq C\|D\varphi^m_xu\|.
    \end{displaymath}
    The dual-cocycle term has zero exponential growth in both time directions, by the definition of the zero Oseledets subspace, i.e.
    \begin{displaymath}
        \lim_{m\to\pm\infty}\frac1m
        \log\|\widehat C_x^{(m)}b_x(u)\|=0.
    \end{displaymath}
    Taking logarithms in the comparison above and dividing by $|m|$ yields
    \begin{displaymath}
        \frac{\log c}{|m|}+\frac{\log\|D\varphi^m_xu\|}{|m|}
        \leq\frac{\log\|\widehat C_x^{(m)}b_x(u)\|}{|m|}
        \leq\frac{\log C}{|m|}+\frac{\log\|D\varphi^m_xu\|}{|m|}.
    \end{displaymath}
    Since $\frac{\log c}{|m|}\to0$ and $\frac{\log C}{|m|}\to0$ as $|m|\to\infty$, the middle term has zero exponential growth in both time directions, and so does $\|D\varphi^m_xu\|$.  This proves \eqref{eq:tangent-zero-exponents}.
\end{proof}

This conclusion concerns $TN$, not the entire ambient tangent bundle $TP|_N$. Nonzero transverse exponents can still occur. In particular, the argument does not rule out the existence of an invariant measure on a hyperbolic periodic orbit.

\subsection{Positive entropy and the tangent directions of a carrier}\label{sec:entropy-contradiction}

The last step is an entropy statement on the compact manifold $\Sigma$. The following lemma relates positive entropy on $\Sigma$ to the tangent directions of the possibly noncompact carrier $N$.

\begin{lemma}\label{lem:entropy-tangent}
    Let $g: Q\to Q$ be a $C^2$ diffeomorphism of a compact smooth manifold without boundary, and let $\nu$ be a $g$-ergodic Borel probability measure with $h_\nu(g)>0$. Let $E_g^u(x)$ be the sum of its strictly positive Lyapunov subspaces. If an embedded $C^1$ submanifold $S\subset Q$ satisfies $\nu(S)=1$, then
    \begin{equation}\label{eq:unstable-tangent-intersection}
        T_xS\cap E_g^u(x)\ne\{0\}
        \quad\text{for }\nu\text{-almost every }x.
    \end{equation}
    More generally, without the assumption $\nu(S)=1$, the set of regular points $x\in S$ with $T_xS\cap E_g^u(x)=\{0\}$ has $\nu$-measure zero. No compactness, closedness, or invariance of $S$ is required.
\end{lemma}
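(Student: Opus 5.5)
Since $\nu(S)=1$ reduces the first claim to the second, I will prove the general statement: the set of Oseledets-regular points $x\in S$ with $T_xS\cap E^u_g(x)=\{0\}$ is $\nu$-null. The tool is Pesin theory for $C^2$ diffeomorphisms with the Ledrappier--Young entropy characterization. Because $h_\nu(g)>0$, Ruelle's inequality gives at least one positive exponent, so $E^u_g$ is nontrivial almost everywhere. Ledrappier--Young then gives the following. There is a measurable partition $\xi$ subordinate to the Pesin unstable manifolds $W^u(x)$, which are $C^{1}$ immersed discs with $T_xW^u(x)=E^u_g(x)$. The conditional measures $\nu^\xi_x$ are not atomic; in fact they have positive lower pointwise dimension. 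Moreover, the entropy formula $h_\nu(g)=H_\nu(g^{-1}\xi\mid\xi)>0$ forces the conditionals not to be concentrated on any set which meets each unstable leaf in a single point.

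The main step argues by contradiction. Suppose the bad set $G=\{x\in S\cap\mathcal R: T_xS\cap E^u_g(x)=\{0\}\}$ has positive measure. By Lusin's theorem and Pesin block decomposition, pick a compact Pesin block $K\subset G$ with $\nu(K)>0$. On $K$ the local unstable manifolds have uniform size and vary continuously in the $C^1$ topology, and $E^u$ is continuous. The submanifold $S$ is only embedded, not closed. However, writing $S$ as a countable union of compact embedded $C^1$ discs and shrinking $K$, I may assume $K\subset S_0$, where $S_0$ is a compact embedded disc whose tangent planes vary uniformly continuously.

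Transversality is then an open condition. At $x\in K$ we have $T_xS_0\cap T_xW^u_{loc}(x)=\{0\}$, and both tangent fields are uniformly continuous. Hence there is a uniform $r>0$ such that for every $x\in K$, the set $S_0\cap W^u_{loc}(x)\cap B(x,r)=\{x\}$. Indeed, if two distinct points of $S_0$ lay on the same short unstable plaque, the secant direction between them would be nearly tangent to both $S_0$ and $W^u$, contradicting a uniform lower bound on the angle between $T S_0$ and $E^u$ on a neighborhood of $K$. Consequently, on $K$ the set $S_0\cap K$ meets each local unstable plaque of radius $r$ in at most one point. Refining $\xi$ so that its atoms through $K$ lie inside these plaques (the Ledrappier--Young partitions can be chosen with arbitrarily small diameter), it follows that $\nu^\xi_x(K)\le \nu^\xi_x(\{x\})=0$ for $\nu$-a.e.\ $x\in K$. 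Integrating then gives $\nu(K)=\int \nu^\xi_x(K)\,d\nu=0$, which is a contradiction.

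I expect the main obstacle to be justifying the non-atomicity of the unstable conditionals from $h_\nu>0$, together with making the transversality-implies-single-point argument uniform when $S$ is only embedded and noncompact. For the first point, I will cite Ledrappier--Young (or Ledrappier--Strelcyn): if the conditionals on unstable leaves had atoms, then the entropy $h_\nu(g)=H(g^{-1}\xi\mid\xi)$ would vanish. For the second, I will use the countable compact-disc exhaustion described above. Only regular points are involved, so no invariance of $S$ is needed.
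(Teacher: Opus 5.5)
Your proposal is correct and follows essentially the same route as the paper: positive entropy forces the conditional measures on unstable leaves to be non-atomic (via Ledrappier--Young), and $T_xS\cap E^u_g(x)=\{0\}$ makes $x$ an isolated point of $S$ inside its unstable leaf, so the bad set is null for almost every conditional measure and hence $\nu$-null. The one real difference is that the paper argues pointwise: each bad point is isolated in its leaf, so the bad set meets each leaf in a discrete and therefore countable subset of a second-countable leaf. This avoids your Pesin-block uniformity, the compact-disc exhaustion, and the need to fit partition elements inside plaques of radius $r$. In your version that last step must be read leafwise, i.e.\ the partition elements must be intrinsically small, which the Ledrappier--Young construction gives after shrinking $K$ to its base set. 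Small extrinsic diameter alone would not suffice, since such a partition element could contain several sheets of the same global leaf.
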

\begin{proof}
    We first explain the precise consequence of the entropy-dimension formula that we need. Let $\lambda_1>\cdots>\lambda_r>0$ be the distinct positive Lyapunov exponents. Pesin theory gives full unstable manifolds on a full-measure regular set. Ledrappier--Young's conditional-dimension theorem and entropy formula give nonnegative numbers $\gamma_i$ with
    \begin{displaymath}
        h_\nu(g)=\sum_{i=1}^r\lambda_i\gamma_i,
        \qquad \delta^u=\sum_{i=1}^r\gamma_i,
    \end{displaymath}
    where each $\gamma_i$ is the unstable-dimension contribution of the conditional measures in the direction of the $i$-th positive exponent, and $\delta^u$ is the almost-sure pointwise dimension of the conditional measures along full unstable leaves \cite[Section~7.3, Proposition~7.3.1, and Theorem~C$'$]{LedrappierYoung1985b}. These results allow for zero Lyapunov exponents and do not require the unstable conditional measures to be absolutely continuous with respect to leaf volume. They therefore apply to time-one maps of smooth flows, which may have zero Lyapunov exponents, in particular in the flow direction. The positive-entropy hypothesis gives $\delta^u>0$.

    Choose a measurable partition $\eta$ subordinate to full unstable manifolds, as in \cite[Section~7.1 and Lemma~9.1.1]{LedrappierYoung1985b}, and disintegrate
    \begin{displaymath}
        \nu=\int\nu_\zeta\,d\overline\nu(\zeta).
    \end{displaymath}
    For almost every partition element $\zeta$, its conditional probability $\nu_\zeta$ is concentrated on one unstable leaf, and for $\nu_\zeta$-almost every $y$, its leafwise local dimension is $\delta^u$. In particular, $\nu_\zeta$ is not atomic. To verify this implication, suppose $\nu_\zeta(\{y\})=a>0$.  For intrinsic unstable balls, $\nu_\zeta(B^u(y,r))\to a$ as $r\downarrow0$, so
    \begin{displaymath}
        \lim_{r\downarrow0}
        \frac{\log\nu_\zeta(B^u(y,r))}{\log r}=0,
    \end{displaymath}
    whereas the almost-sure value of this limit on the regular set is $\delta^u>0$. An atom of positive mass cannot belong to a conditional null exceptional set. This contradicts $\delta^u>0$.  No absolute continuity of the unstable conditional measures is being assumed.

    Let $\mathcal R$ be a common full-measure set on which the following hold simultaneously: the Oseledets splitting exists and the Lyapunov exponents, together with their multiplicities, are constant; the Pesin unstable manifolds are defined and depend measurably on the point; and the Ledrappier--Young conditional-dimension conclusions and that non-atomicity of the unstable conditional measures are valid. Put
    \begin{displaymath}
        B=\{x\in S\cap \mathcal R:T_xS\cap E_g^u(x)=\{0\}\}.
    \end{displaymath}

    This set is measurable: $S$ is Borel, $x\mapsto T_xS$ is continuous on $S$, and $x\mapsto E_g^u(x)$ is measurable on $\mathcal R$; trivial intersection is a measurable condition on the relevant Grassmannians, since it is equivalent to the identity
    \begin{displaymath}
        \dim(T_xS)+\dim E_g^u(x)
        =\dim\left(T_xS+E_g^u(x)\right).
    \end{displaymath}

    Take $x\in B$.  In an ambient neighborhood $V$ of $x$, express $S$ as
    \begin{displaymath}
        S\cap V=\rho^{-1}(0),\qquad
        \rho:V\longrightarrow\mathbb R^c,
        \qquad \ker D\rho_x=T_xS,
    \end{displaymath}
    where $\rho$ is a $C^1$ submersion.  On a small embedded unstable plaque through $x$, the derivative of the restricted map $\rho$ is injective, since its kernel is $T_xS\cap E_g^u(x)=\{0\}$. Write $k=\dim E_g^u(x)$.  Taking $k$ suitable linear combinations of the components of $\rho$, we obtain a linear map $L:\mathbb R^c\to\mathbb R^k$ such that the derivative of $L\circ\rho$ restricted to the plaque is invertible at $x$. By the inverse function theorem in leaf coordinates, $x$ is the only zero of this restricted map in a sufficiently small intrinsic leaf neighborhood. Every point of $S$ is a zero of $L\circ\rho$. Consequently, $x$ is an isolated intersection of $S$ with its unstable leaf.

    Now fix a full unstable leaf $W^u$. At each regular point $x\in B\cap W^u$, this leaf agrees with the unstable leaf through $x$ and has tangent space $E_g^u(x)$. The preceding argument shows that every point of $B\cap W^u$ is isolated in the intrinsic leaf topology. A Pesin leaf is a second-countable immersed manifold, for instance, by its construction as a countable union of iterated local disks. Every discrete subset of a second-countable space is at most countable: choose for each point the least member of a fixed countable basis that contains it and no other point of the subset. Distinct points receive distinct basis members. Thus $B\cap W^u$ is at most countable. We do not need the whole intersection $S\cap W^u$ to be discrete or the leaf to be properly embedded.

    For almost every $\eta$-element $\zeta$, the set $B\cap\zeta$ is countable, since $\zeta$ lies in a single unstable leaf, and $\nu_\zeta$ is not atomic. Hence $\nu_\zeta(B)=0$.  Disintegration gives
    \begin{displaymath}
        \nu(B)=\int_{Q/\eta}\nu_\zeta(B)\,d\overline{\nu}(\zeta)=0,
    \end{displaymath}
    where $\overline{\nu}$ is the pushforward probability on the quotient space $Q/\eta$.

    This proves the general assertion. When $\nu(S)=1$, its complement in the regular part of $S$ has full $\nu$-measure, so the set of regular points of $S$ with $T_xS\cap E_g^u(x)=\{0\}$ is $\nu$-null, which yields \eqref{eq:unstable-tangent-intersection}.
\end{proof}

\subsubsection{Completion of the Hamiltonian proof}

We can now finish the proof of Theorem~\ref{thm:hamiltonian-entropy}. Recall that $\mu$ was an arbitrary $f$-ergodic probability and $N$ its minimal analytic carrier. Suppose, for a contradiction, that $h_\mu(f)>0$. The map $f$ is an analytic, in particular $C^2$, diffeomorphism of the compact boundaryless manifold $\Sigma$. The submanifold $N$ is embedded and $\mu(N)=1$.  We may therefore apply Lemma~\ref{lem:entropy-tangent} with
\begin{displaymath}
    Q=\Sigma,\qquad g=f,\qquad \nu=\mu,\qquad S=N.
\end{displaymath}
Indeed, the hypotheses of Lemma~\ref{lem:entropy-tangent} are satisfied: $\Sigma$ is a compact smooth manifold without boundary; $f$ is a $C^2$ diffeomorphism of $\Sigma$; $\mu$ is an $f$-ergodic Borel probability measure with $h_\mu(f)>0$; and $N\subset\Sigma$ is an embedded $C^1$ submanifold with $\mu(N)=1$.

For almost every $x$, there exists a nonzero vector
\begin{displaymath}
    u\in T_xN\cap E_f^u(x).
\end{displaymath}
Choose such a point at which Proposition~\ref{prop:tangent-zero-exponents} also holds. As a nonzero vector in the sum of positive-exponent subspaces, $u$ has a strictly positive forward Lyapunov exponent for $D f$:
\begin{displaymath}
    \lim_{m\to+\infty}\frac1m\log\|Df^mu\|=\lambda>0.
\end{displaymath}
As a vector tangent to $N$, it has exponent zero by \eqref{eq:tangent-zero-exponents}:
\begin{displaymath}
    \lim_{m\to+\infty}\frac1m\log\|Df^mu\|=0.
\end{displaymath}
These two conclusions are incompatible. They concern the same derivative and the same restricted ambient norm, because $T_xN\subset T_x\Sigma$ and $Df=D\varphi^1|_{T\Sigma}$; the norm used in Lemma~\ref{lem:entropy-tangent} and in Proposition~\ref{prop:tangent-zero-exponents} is the same restriction of the ambient norm to $T_x\Sigma$.  Hence, the assumption $h_\mu(f)>0$ is impossible. Thus
\begin{displaymath}
    h_\mu(f)=0
    \quad\text{for every }f\text{-ergodic probability }\mu.
\end{displaymath}
Since $\mu$ was an arbitrary $f$-ergodic probability, this conclusion holds for all $f$-ergodic probabilities on $\Sigma$.

Finally, entropy is affine under ergodic decomposition, and the variational principle holds for the continuous map of the compact metric space $\Sigma$ \cite{Goodman1971, Walters1982}. Writing $\mathcal M_f(\Sigma)$ and $\mathcal M_f^{\mathrm{erg}}(\Sigma)$ for the invariant and ergodic invariant probabilities, respectively, we obtain
\begin{displaymath}
    \begin{split}
        h_{\mathrm{top}}(f)
 &=\sup_{\nu\in\mathcal M_f(\Sigma)}h_\nu(f)\\
 &=\sup_{\mu\in\mathcal M_f^{\mathrm{erg}}(\Sigma)}h_\mu(f)=0.
    \end{split}
\end{displaymath}
Indeed, the first equality is the variational principle applied to the continuous map $f$ of the compact metric space $\Sigma$.  For the second equality, every invariant probability $\nu$ admits an ergodic decomposition $\nu=\int\nu_x\,d\nu(x)$ with $f$-ergodic components $\nu_x$, and the entropy is affine under this decomposition,
\begin{displaymath}
    h_\nu(f)=\int h_{\nu_x}(f)\,d\nu(x).
\end{displaymath}
Since every $f$-ergodic probability has zero entropy, every invariant probability has zero entropy, and the supremum over invariant probabilities equals the supremum over ergodic invariant probabilities, which is zero. This proves Theorem~\ref{thm:hamiltonian-entropy}.

\subsection{Application to geodesic flows}\label{sec:geodesic-application}

We prove Theorem~\ref{thm:geodesic-entropy}. The cotangent bundle carries its canonical real-analytic symplectic form, and the real-analytic metric makes
\begin{displaymath}
    H(x,p)=\tfrac12g_x^{-1}(p,p)
\end{displaymath}
a real-analytic function. The set $\Sigma=S^*M=H^{-1}(1/2)$ is compact by local triviality, since it is a fiber bundle over the compact base $M$ with compact fiber $S^{n-1}$. It is connected when $n\geq2$, because the base and the fiber $S^{n-1}$ are connected. It is a regular energy hypersurface: the vertical derivative of $H$ at $p\ne0$ is the nonzero linear form $\dot p\mapsto g_x^{-1}(p,\dot p)$.

The assumed integrals on $U$ satisfy precisely the hypotheses of Theorem~\ref{thm:hamiltonian-entropy}. Therefore
\begin{displaymath}
    h_{\mathrm{top}}(\varphi^1|_{S^*M})=0.
\end{displaymath}
The Legendre transform
\begin{displaymath}
    \mathcal L:T^1M\longrightarrow S^*M,
    \qquad \mathcal L(x,v)=(x,g_x(v,\cdot))
\end{displaymath}
is a real-analytic diffeomorphism that conjugates the unit speed geodesic flow to the Hamiltonian flow on $S^*M$. Hence
\begin{displaymath}
    h_{\mathrm{top}}(\phi^1)=h_{\mathrm{top}}(\varphi^1|_{S^*M})=0,
\end{displaymath}
which proves the theorem.

\begin{remark}\label{rem:analytic-versus-flat}
    The proof does not show that the singular set of the original moment map has zero measure for every invariant probability. A minimal carrier $N$ may be entirely contained in that singular set. What holds is that a proper, relatively closed, flow-invariant analytic subset of this particular $N$ has measure zero.

    The crucial normal expansion consequence of analyticity is that a nonzero integral cannot vanish to infinite normal order. If $A|_N=0$ and $A\not\equiv0$, then some finite normal term $P_A$ survives. For the complete family obtained in Proposition~\ref{prop:finite-elimination}, whose fiber differentials are independent on the sets provided by Lemma~\ref{lem:jacobian-criterion}, the Poisson identities, equivariance, and the zero total weight property together force the tangent Lyapunov exponents of the carrier to vanish. For a smooth function, the finite-order starting point can fail:
    \begin{displaymath}
        \rho(z)=
        \begin{cases}
            e^{-1/z^2},&z\ne0,\\
            0,&z=0
        \end{cases}
    \end{displaymath}
    is nonzero and smooth, but every derivative at zero vanishes. There is then no first nonzero normal polynomial to which the argument could be applied. This explains the obstruction to carrying the proof over to smooth Liouville integrability without an additional hypothesis.
\end{remark}

\section*{Acknowledgments}
This research is partially supported by the National Key R\&D Program of China grant number 2022YFA1005900.

Fei Liu is partially supported by the Natural Science Foundation of China under Grant No.~12571179.

Xiaokai Liu would love to thank Professor Raul Ures and Professor Jana Hertz for their invaluable guidance and continuous support at Southern University of Science and Technology.

Xiang Zhang is partially supported by the National Natural Science Foundation of China under Grant No.~12471169.

\end{document}